%%%%%%%%%%%%%%%%%%%%%%% file template.tex %%%%%%%%%%%%%%%%%%%%%%%%%
%
% This is a general template file for the LaTeX package SVJour3
% for Springer journals.          Springer Heidelberg 2010/09/16
%
% Copy it to a new file with a new name and use it as the basis
% for your article. Delete % signs as needed.
%
% This template includes a few options for different layouts and
% content for various journals. Please consult a previous issue of
% your journal as needed.
%
%%%%%%%%%%%%%%%%%%%%%%%%%%%%%%%%%%%%%%%%%%%%%%%%%%%%%%%%%%%%%%%%%%%
%
% First comes an example EPS file -- just ignore it and
% proceed on the \documentclass line
% your LaTeX will extract the file if required
% [arxiv_v2: filecontents example.eps stripped, 188 chars]
\RequirePackage{fix-cm}

\documentclass[smallextended]{svjour3}       % onecolumn (second format)
\smartqed  % flush right qed marks, e.g. at end of proof
\usepackage{graphicx}
\usepackage[colorlinks]{hyperref}
\usepackage{mathrsfs,enumitem,amsmath,amssymb}
\usepackage{tikz,tkz-euclide}
\usepackage{tikz-qtree}

\usetikzlibrary{calc,hobby}
\usetikzlibrary{decorations.pathmorphing,patterns,decorations,shapes,arrows, intersections,matrix,fit,calc,trees,positioning,arrows,chains, shapes.geometric,shapes,angles,quotes,fit,math}
\usetikzlibrary{arrows,positioning}
\usetikzlibrary{decorations.markings}

\def\ttitle{Effect of density dependence on coinfection dynamics}

\tikzset{
  big arrow/.style={
    decoration={markings,mark=at position 1 with {\arrow[scale=2,#1]{>}}},
    postaction={decorate},
    shorten >=0.4pt}}
\def\FIG{
\begin{figure}[h]
	\begin{center}
		\begin{tikzpicture}[every node/.style={ minimum height={1cm},minimum width={2cm},thick,align=center}]
		%node distance = 10cm

		\node[draw] (I1) {$I_1$};
		\node[draw, above right=of I1] (S) {$S$};
		\node[draw, below=of S] (I12) {$I_{12}$};
		\node[draw, below=of I12] (R) {$R$};
		\node[draw, right= of I12] (I2) {$I_2$};
		%\draw[->] (S) -- (I1); %node[above] {$\alpha_1$};
		\draw[->] (S) -- (I12) ;%node[above] {$\alpha_2$};
	%	\draw[->,] (S) -- (I2);%edge node[right] {$\alpha_3$};
		\draw[->,] (I2) -- (I12);
		\draw[->,big arrow] (I1) -- (I12);
		%\draw[->] (I1) -- (R);
		%\draw[->] (I2) -- (R);
		\draw[->,big arrow] (I12) -- (R);
		%\draw [->] (S)-- ++(.5,.5) -- ++(0.1,0.5) -|  (I2.east);

    \draw[->,big arrow] (S) .. controls (2,2) .. (I1) ;
    \draw[->,big arrow] (S) .. controls (4,2) .. (I2) ;

 \draw[->,big arrow] (I2) .. controls (5,-2) .. (R) ;
  \draw[->,big arrow] (I1) .. controls (1,-2) .. (R) ;

		\node[left =of I1] at (2.9,1.5) {$\alpha_1$};
		\node[left =of I1] at (7.2,1.5) {$\alpha_2$};
		\node[left =of I12]at (3.5,0.25) {$\eta_1,\gamma_1$};
		\node[left =of I1] at (5.3,1) {$\alpha_3$};
		\node[left =of I1] at (4.5,-0.9) {$\rho_3$};
		\node[left =of I1] at (6.5,0.25) {$\eta_2,\gamma_2$};
		\node[left =of I2] at (2.6,-1.8) {$\rho_1$};
		\node[left =of I1] at (7.5,-1.8) {$\rho_2$};
		
		\end{tikzpicture}
	\end{center}
	%\vspace*{-0.5cm}
	\caption{ Flow diagram for two strains coinfection model.}
	\label{flowdiag}
\end{figure}

}

%\newtheorem{theorem}{\rm\bf Theorem}%[section]
%\newtheorem{proposition}{\rm\bf Proposition}
%\newtheorem{lemma}{\rm\bf Lemma}
%\newtheorem{corollary}{\rm\bf Corollary}
%\newtheorem{fact}[theorem]{\rm\bf Fact}
%\newtheorem*{theorem*}{Theorem}
%\newtheorem*{theorem 1}{\rm\bf Proposition 1}
%\newtheorem*{theorem 2}{\rm\bf Proposition 2}
%\newtheorem*{conj*}{Conjecture}

%\theoremstyle{definition}
%\newtheorem{definition}[theorem]{\rm\bf Definition}
%\newtheorem*{definitions}{\rm\bf Definitions}
%
%\theoremstyle{remark}
%\newtheorem{remark}[theorem]{\rm\bf Remark}
%\newtheorem{remarks}[theorem]{\rm\bf Remarks}
%\newtheorem{example}[theorem]{\rm\bf Example}
%\newtheorem{examples}[theorem]{\rm\bf Examples}
%\newtheorem{question}[theorem]{\rm\bf Question}
%\newtheorem*{astexample}{\rm\bf Example}
%
% \usepackage{mathptmx}      % use Times fonts if available on your TeX system
%
% insert here the call for the packages your document requires
%\usepackage{latexsym}
% etc.
%
% please place your own definitions here and don't use \def but
% \newcommand{}{}
%
% Insert the name of "your journal" with
%\journalname{Anal. Math. Phys.}
%
\begin{document}

\title{\ttitle%\thanks{Grants or other notes
%about the article that should go on the front page should be
%placed here. General acknowledgments should be placed at the end of the article.}
}

%\subtitle{Do you have a subtitle?\\ If so, write it here}

%\titlerunning{Short form of title}        % if too long for running head

\author{Jonathan Andersson\and Samia Ghersheen \and Vladimir Kozlov \and Vladimir~G.~ Tkachev* \and Uno Wennergren}

\authorrunning{J. Andersson, S. Ghersheen, V. Kozlov, V. Tkachev, U. Wennergren} % if too long for running head

\institute{J. Andersson \at
              Department of Mathematics, Link\"oping University \\
              \email{jonathan.andersson@liu.se}           %  \\
%             \emph{Present address:} of F. Author  %  if needed
           \and
           S. Ghersheen\at
            Department of Mathematics, Link\"oping University \\
              \email{samia.ghersheen@liu.se}
              \and
            V. Kozlov\at
            Department of Mathematics, Link\"oping University \\
              \email{vladimir.kozlov@liu.se}
              \and
           V. Tkachev\at
            Department of Mathematics, Link\"oping University \\
              \email{vladimir.tkatjev@liu.se}
              \and
              U. Wennergren\at
Department of Physics, Chemistry, and Biology, Link\"oping University\\
\email{uno.wennergren@liu.se}
    }

\date{Received: date / Accepted: date}
% The correct dates will be entered by the editor

\maketitle

\begin{abstract}
In this paper we develop an SIR model for coinfection. We discuss how the underlying dynamics depends on the carrying capacity $K$: from a simple dynamics to a more complex. This can also help in understanding of appearance of more complicated dynamics,  for example, chaos and periodic oscillations, for large values of $K$. It is also presented that pathogens can invade in population and their invasion depends on the carrying capacity $K$ which shows that the progression of disease in population depends on carrying capacity. More specifically, we establish all possible scenarios (the so-called transition diagrams)  describing an evolution of an (always unique) locally stable equilibrium state for fixed fundamental parameters (transmission and death rates) as a function of the carrying capacity $K$. An important implication of our results is the following important observation. Note that one can regard the value of  $K$ as the natural `size' (the capacity) of a habitat. From this point of view, an isolation of individuals (the strategy which showed its efficiency for COVID-19 in various countries) into smaller resp. larger groups can be modelled by smaller resp. bigger values of $K$. Then we conclude that the infection dynamics becomes more complex for larger groups, as it fairly maybe expected for values of the reproduction number $R_0\approx 1$. We show even more, that for the values $R_0>1$ there are several (in fact four different) distinguished scenarios where the infection complexity (the number of nonzero infected classes) arises with growing $K$. Our approach is based on a bifurcation analysis which allows to generalize  considerably the previous Lotka-Volterra model considered previously in \cite{SKTW18a}.
\keywords{SIR model\and coinfection\and carrying capacity\and global stability}
% \PACS{PACS code1 \and PACS code2 \and more}
% \subclass{MSC code1 \and MSC code2 \and more}
\end{abstract}

	\section{Introduction}
	Two or more pathogens circulating in the same population of hosts can interact in various ways. One disease can, for instance, impart cross-immunity to the other, meaning that an individual infected with the first disease becomes partially or fully immune to infection with the second \cite{Castillo,Newman}. One disease can mediate the progression of other disease in population.
	
	Therefore it is important to understand the dynamics of coexistent  pathogens. In epidemiology the interaction of strains of the same pathogen, such as influenza or interacting diseases such as HIV/AIDS and hepatitis is very common and involves many complexities. The central problem in studying such systems is the explosive growth in the number of state variables of the system with the linear increase in the number of strains or pathogens \cite{Gog}. Mostly these strains or pathogens are interacting in a way which has limited the further understanding and dynamics of such systems in terms of limited analytical progress. In this regard, it is a challenge to understand the dynamics and evolution of pathogens in population. The complexity of multiple strain models allows a great variability in modelling strategies. Recently, attention has focused on understanding the mechanisms that lead to coexistence, competitive exclusion and co-evolution of pathogen strains in infectious diseases which is important from the management of disease prospective.

	Several studies exist on the coinfection with specific diseases.
	There are also studies \cite{May3,May4,Mosquera,Castillo,new_cite} which have addressed this issue in general. In \cite{Bremermann}, a mathematical model has been studied and showed that strains with differing degree of infectivity extinct, except for those that have higher the basic reproduction number. Allen et al in \cite{Allen2} showed coexistence only occur when the basic reproduction number is large enough for persistence of strains. They numerically illustrate the existence of globally stable coexistence equilibrium point. In an other study,  Allen et al  \cite{Allen}, studied an SI model of coinfection with application on hanta virus. They assumed a logistic growth with carrying capacity  and  horizontal transmission of both viruses and yet only vertical transmission of virus 2. The condition of coexistence of two strain is described.

	In \cite{Bichara},  a SIR model with vertical and horizontal transmission and a different population dynamics with limited immunity is considered.  It is shown that the competitive exclusion can occur which is independent of basic reproduction number but a threshold.  The existence and stability of endemic equilibrium is  also shown. Since coinfection involves many complexities, many studies are only restricted to numerical simulation to understand the dynamics.

	Nevertheless, mathematical modelling is one of the effective tool to understand the dynamics of biological system.  But the major challenge is to balance between the practicality and mathematical solvability of the model. The cost of realisticity in mathematical modelling is the diminution of mathematical machinery.

	The way to deal with this challenge is to divide the model into different sub models. Difference between the models is due to different biological assumptions. There are two major advantages in that case. First is the understanding of the system completely under certain assumptions. It can help to apply it to some real-life situations, since the controlling strategies for a diseases sometimes moves the original system to more simple system. In those cases the complete information about such system is needed to deal with that type of  unexpected situation from management prospective.  The second is, by relaxing assumptions, one can understand the role of each new parameter and its effects on the dynamics of epidemic.

	One of the important characteristics, to understand the coinfection dynamics, is transmission mechanism.
	In paper  \cite{SKTW18a} we have developed an SIR model to understand the dynamics of coinfection. Limited transmission is considered and the competitive exclusion principle is observed. The transition dynamics is also observed when the equilibrium points exist in the form of branches for each set of parameters. The compete dynamics of the system for all set of parameters is described by using linear complementarity problem. It appeared that there always exist an equilibrium point which is globally stable. It is showed that the dynamics of the system changes when carrying capacity changes.
	 There are certain assumptions on the transmission of coinfection in that model. It is assumed that the coinfection can only occur as a result of contact between coinfected class and susceptible class, coinfected class and single infected classes. Interaction between two single infected classes is not considered. Also the simultaneous transmission of two pathogens from coinfected individual to susceptible individual is assumed.

In this paper we develop an SIR model for coinfection which is a relevant extension of model presented in \cite{SKTW18a} to understand the role of each new transmission parameter in the dynamics. Our aim here is to investigate how the dynamics changes due to a certain parameter, which in our case is the carrying capacity $K$, from a simple dynamics to a more complicated. This can help in understanding of appearance of more complicated dynamics for example chaos etc. 	Contrary to \cite{SKTW18a}, we could no more make use of the linear complementarity problem due to some additional term which appeared by relaxing the assumption of interaction between two single infected classes. We use the approach based on bifurcation analysis. The density dependent population growth is also considered. It is presented that pathogens can invade in population and their invasion depends on the carrying capacity $K$ which shows that the progression of disease in population depends on carrying capacity.

\FIG
	
	%In section \ref{modelintro} we develop a model of coinfection under more realistic assumption on the transmission of pathogens. In section \ref{equipoint},\ref{stability} we discuss some basic properties of equilibrium points and their stability analysis.  In section \ref{transition},  \ref{sec:biff} we discuss the dependence of disease progression on the carrying capacity and examine different transition scenarios appear as a result of increasing carrying capacity and the bifurcation analysis.

\section{ Model formulation and the main result}

\subsection{The model}

The present model is displayed in Figure~\ref{flowdiag}. More precisely, we assume that the single infection cannot be transmitted by the contact with a coinfected person. According to Figure~\ref{flowdiag}, this process gives rise to the  system of ODEs:
\begin{equation}\label{submodel2}
\left\{
\begin{array}{ll}
S' &=(r(1-\frac{S}{K})-\alpha_1I_1-\alpha_2I_2-\alpha_3I_{12})S,\\
I_1' &=(\alpha_1S - \eta_1I_{12}-\gamma_1I_2 - \mu_1)I_1,\\
I_2' &=(\alpha_2S - \eta_2I_{12}-\gamma_2I_1- \mu_2)I_2,\\
I_{12}' &=(\alpha_3S+ \eta_1I_1+\eta_2I_2-\mu_3)I_{12}+\overline{\gamma}I_1I_2, \\
R' &=\rho_1 I_1+\rho_2I_2+\rho_3 I_{12}-d_4 R,
\end{array}
\right.
\end{equation}
where we use the following notation:
\begin{itemize}
\item[$\bullet$] $S$ represents the susceptible class,
\item[$\bullet$] $I_1$ and $I_2$ are the infected classes from strain 1 and strain 2 respectively,
\item[$\bullet$] $I_{12} $ represents the co-infected class,
\item[$\bullet$]
$R$ represents the recovered class.
\end{itemize}
Following \cite{Allen,Bremermann,Zhou}, we assume a limited population growth by making the per capita reproduction rate depend on the density of population. The recovery of each infected class is presented by the last equation in \eqref{submodel2}. The fundamental  parameters of the system are:
\begin{itemize}
\item[$\bullet$] $r=b-d_0$ is  the intrinsic rate of natural increase, where $b$ is the birthrate and $d_0$ is the death rate of $S$-class,
\item[$\bullet$]
$K $ is the carrying capacity (see also the next section),
\item[$\bullet$]
$\rho_i$ is the recovery rate from each infected class ($i=1,2,3$),
\item[$\bullet$] $d_i $ is the death rate of each class,  $( i=1,2,3,4)$, where $d_3$ and $d_4$ correspond $I_{12}$ and $R$ respectively,
\item[$\bullet$]
$\mu_i=\rho_i+d_i, i=1,2,3.$

\item[$\bullet$]
$\alpha_1$, $\alpha_2$, $\alpha_3$  are the rates of transmission of strain 1, strain 2 and both strains (in the case of coinfection),
\item[$\bullet$]
$\gamma_i$ is the rate at which infected with one strain get infected with  the other strain and move to a coinfected class ($i=1,2$),
\item[$\bullet$] $\bar\gamma=\gamma_1+\gamma_2$,

\item[$\bullet$]     $\eta_i$ is the rate at which infected from one strain getting  infection from a co-infected class $( i=1,2)$;
\end{itemize}

Summing up all equations in \eqref{initdata} we have
 \begin{equation}\label{sumup}
 \begin{array}{l}
N'=r(1-\frac{S}{K})S-d_1I_1-d_2I_2-d_3I_{12}-d_4 R
 \end{array}
 \end{equation}
 where
 $
 N=S+I_1+I_2+I_{12}+R
 $
 is the total population.
 %In particular, if the death mortalities $d_i$ are equal for each class, i.e. $d_i=d_0$ for all $i=1,2,3,4$, one obtains $N'=(b-\frac{rS}{K})S-d_0N$

We only need to consider the first four equations of \eqref{initdata} since $R$ appears only in the last equation, hence it does not affect the disease dynamics.
Rewrite the reduced system as
\begin{equation}\label{MAIN}
\left\{
\begin{array}{ll}
S' &=(r(1-\frac{S}{K})-\alpha_1I_1-\alpha_2I_2-\alpha_3I_{12})S \\
I_1' &=(\alpha_1S - \eta_1I_{12}-\gamma_1I_2 - \mu_1)I_1 \\
I_2' &=(\alpha_2S - \eta_2I_{12}-\gamma_2I_1- \mu_2)I_2 \\
I_{12}' &=(\alpha_3S+ \eta_1I_1+\eta_2I_2-\mu_3)I_{12}+\overline{\gamma}I_1I_2
\end{array}
\right.
\end{equation}

Furthermore, we only consider the case when the reproduction rate of the susceptible class is not less than their death rate, i.e.
$$
r>0 \quad \Leftrightarrow \quad b>d_0.
$$
Indeed, it is easy to see that the population will go extinct otherwise.
The reduced system is considered under the natural initial conditions
\begin{equation}\label{initdata}
S(0)>0,\quad I_1(0)\geq0, \quad I_2(0)\geq0,\quad  I_{12}(0)\geq0.
\end{equation}
Then it easily follows that any integral curve with \eqref{initdata} is well-defined and staying in the non negative cone for all $t\ge0$. Note also that since the variable $R$ is not present in the first four equations, without loss of generality, we may consider only the first four equations of system \eqref{submodel2}.
%It is convenient to keep the following unifying notation: $S=Y_0$, $I_1=Y_{1}$, $I_{2}=Y_{2}$ and $I_{12}=Y_{3}$.

\subsection{Reproduction rates}
It is convenient to introduce the notation
$$
\sigma_i:=\frac{\mu_i}{\alpha_i}, \qquad 1\le i\le 3.
$$
We shall always assume that the strains 1 and 2 are  different, i.e.
$ \sigma_1 \neq \sigma_2.
$
Then by change of the indices (if needed) we may assume that
$$\sigma_1 < \sigma_2.
$$
Under this assumption, $I_1$ is the primary disease.

Furthermore, let us assume for a moment that the susceptible class and \textit{only one} infected class are nonzero. Let us suppose that only $I_i$ is nonzero zero. Then \eqref{MAIN} reduces to
\begin{equation}\label{MAIN1}
\left\{
\begin{array}{ll}
S' &=(r(1-\frac{S}{K})-\alpha_iI_i)S \\
I_i' &=\alpha_i(S - \sigma_i)I_i
\end{array}
\right.
\end{equation}
It is easy to see that there always exist two equilibrium points: the trivial equilibrium $E_1=(0,0)$ and the  disease-free equilibrium $E_2=(K,0)$. Furthermore, if $K>\sigma_i$ then also exists (in the positive cone) the coexistence equilibrium $E_3=(\sigma_i, \frac{r}{\alpha_i}(1-\frac{\sigma_i}{K}))$.
Next, an elementary analysis shows that the following is true.

\begin{proposition}\label{pro:dim2}
The trivial equilibrium state $E_1$ is always unstable. For any positive $K\ne \sigma_1$ there exists a unique locally stable equilibrium point $E(K)$:
\begin{itemize}
  \item[$\bullet$] if $0<K<\sigma_i$ then $E(K)=E_2$;
  \item[$\bullet$] if $K>\sigma_i$ then $E(K)=E_3$.
\end{itemize}

\end{proposition}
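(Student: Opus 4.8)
The plan is to carry out a complete phase-plane analysis of the planar system \eqref{MAIN1} by linearizing at each equilibrium and applying the trace--determinant criterion. First I would locate all equilibria: setting both right-hand sides to zero, the equation $\alpha_i(S-\sigma_i)I_i=0$ forces either $I_i=0$ or $S=\sigma_i$. The branch $I_i=0$, combined with $r(1-\frac{S}{K})S=0$, yields exactly $E_1=(0,0)$ and $E_2=(K,0)$; the branch $S=\sigma_i$ forces $I_i=\frac{r}{\alpha_i}(1-\frac{\sigma_i}{K})$, i.e. $E_3$, which lies in the open positive cone precisely when $K>\sigma_i$. This confirms that the three listed points are the only equilibria and records the existence threshold for $E_3$.

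Next I would compute the Jacobian of the vector field,
\[
J(S,I_i)=\begin{pmatrix} r(1-\tfrac{2S}{K})-\alpha_iI_i & -\alpha_iS\\[2pt] \alpha_iI_i & \alpha_i(S-\sigma_i)\end{pmatrix},
\]
and evaluate it at each point. At $E_1$ the matrix is diagonal with eigenvalues $r>0$ and $-\alpha_i\sigma_i<0$; the positive eigenvalue shows $E_1$ is a saddle, hence always unstable. At $E_2=(K,0)$ the Jacobian is upper triangular with eigenvalues $-r<0$ and $\alpha_i(K-\sigma_i)$, so $E_2$ is asymptotically stable exactly when $K<\sigma_i$ and a saddle when $K>\sigma_i$.

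The only computation requiring care is $E_3$. Using the equilibrium relation $\alpha_iI_i^*=r(1-\frac{\sigma_i}{K})$ to simplify the top-left entry, I expect the Jacobian at $E_3$ to reduce to trace $-\frac{r\sigma_i}{K}<0$ and determinant $\alpha_i^2\sigma_iI_i^*>0$ whenever $I_i^*>0$, i.e. $K>\sigma_i$. By the trace--determinant criterion both eigenvalues then have negative real part, so $E_3$ is locally asymptotically stable throughout its range of existence. Assembling the three cases gives the stated dichotomy, and in each regime exactly one equilibrium is stable: $E_2$ for $0<K<\sigma_i$ (with $E_3$ absent) and $E_3$ for $K>\sigma_i$ (with $E_2$ now a saddle), which yields the uniqueness of $E(K)$.

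I do not expect a genuine obstacle here, since the system is two-dimensional and every linearization is triangular or immediately reducible via the trace--determinant test. The main points to watch are the algebraic simplification of the top-left entry at $E_3$ and the bookkeeping of the borderline value $K=\sigma_i$, where $\alpha_i(K-\sigma_i)=0$ makes $E_2$ non-hyperbolic and $E_2$, $E_3$ collide in a transcritical bifurcation; the hypothesis $K\neq\sigma_i$ is precisely what removes this degenerate case.
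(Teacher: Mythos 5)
Your proposal is correct: the enumeration of equilibria, the Jacobian, and the stability conclusions at $E_1$, $E_2$, $E_3$ (including the simplification of the top-left entry at $E_3$ via $\alpha_iI_i^*=r(1-\sigma_i/K)$ and the trace--determinant argument) all check out. The paper omits the proof entirely, calling it ``an elementary analysis,'' and your linearization argument is exactly the standard analysis being alluded to, so there is nothing to compare beyond noting that you have supplied the details the paper leaves implicit.
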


The reproduction number
\begin{equation}\label{R0i}
R_0(I_i):=\frac{K}{\sigma_i}
\end{equation}
can be used as a threshold. In other words, the transition from the disease-free equilibrium state to the disease equilibrium (the coexistence equilibrium point) occurs exactly when the reproduction number $R_0(I_i)$ of the corresponding infected class $I_i$  exceeds $1$. We illustrate the transition by the diagram
$$
E_2 \rightarrow E_3.
$$

The latter also clarifies the meaning of the parameter $\sigma_i$.  Namely, note that a more aggressive virus $I$ has a greater value of $R_0(I)$. For a fixed value of the carrying capacity $K$ this implies that a more aggressive virus $I$ has a smaller value of $\sigma$ (which, for example, means smaller recovery rate $\rho$ or greater rate of transmission $\alpha$).

It is natural to assume that the reproduction number of coinfection must be less than that of virus 1 and 2 respectively \cite{Martcheva}. This  makes it natural to assume the following hypotheses:
\begin{equation}\label{assum}
\sigma_1< \sigma_2 < \sigma_3.
\end{equation}

\subsection{Some important notation}
We shall also assume that \eqref{MAIN} satisfies the following non-degenerate condition
\begin{equation}\label{deltamu}
\Delta_\alpha=\left|
              \begin{array}{cc}
                \eta_1 & \alpha_1 \\
                \eta_2 & \alpha_2 \\
              \end{array}
            \right|=\eta_1\alpha_2-\eta_2\alpha_1\neq 0.
\end{equation}
This condition has a natural  biological explanation: the virus strains 1 and 2 have different (co)infections rates.
Let us also define
\begin{align}
A_1&= \frac{\alpha_1\alpha_3}{r}(\sigma_3-\sigma_1),\qquad \eta_1^*:=\frac{\eta_1}{A_1}\label{A1}\\
  A_2&= \frac{\alpha_2\alpha_3}{r}(\sigma_3-\sigma_2),\qquad \eta_2^*:=\frac{\eta_2}{A_2}\label{A3}\\
  A_3&= \frac{\alpha_1\alpha_2}{r}(\sigma_2-\sigma_1), \qquad \gamma^*:=\frac{\gamma_1}{A_3}.\label{A2}
 \end{align}
 By \eqref{assum} $A_1,A_2,A_3>0$.
 We also have
  \begin{equation}\label{etaA123}
  \alpha_2A_1=\alpha_3A_3+  \alpha_1A_2
  \end{equation}
and
\begin{equation}\label{deltamu1}
\Delta_\mu=\frac{\eta_1r}{\alpha_1}A_3+\sigma_1\Delta_\alpha=
\frac{\eta_2r}{\alpha_2}A_3+\sigma_2\Delta_\alpha,
\end{equation}
hence $A_3>0$ implies
\begin{equation}\label{deltamu2}
\Delta_\mu>\sigma_1\Delta_\alpha \qquad \Delta_\mu>\sigma_2\Delta_\alpha.
\end{equation}
This implies an inequality which will be useful in the further analysis:
\begin{equation}\label{deltamu21}
\sigma_2(\Delta_\alpha+\gamma_2\alpha_3)<\Delta_\mu+\gamma_2\mu_3.
\end{equation}
%and for small enough $\gamma_1$ we even have
%\begin{equation}\label{deltamu21}
%\sigma_2(\Delta_\alpha-\gamma_1\alpha_3)<\Delta_\mu-\gamma_1\mu_3
%\end{equation}
We shall further make use of the following relations:
 \begin{equation}\label{etaA13}
\begin{split}\eta_1 A_2 -\eta_2A_1&< \eta_1\frac{\alpha_2}{\alpha_1}A_1-\eta_2A_1=\Delta_\alpha \frac{A_1}{\alpha_1}.
\end{split}
\end{equation}
On the other hand, one has
\begin{align}\label{etaAetaA}
\eta_2^*- \eta_1^* =
\frac{(\Delta_\mu-\Delta_\alpha\sigma_3)\alpha_3}{A_1A_2r}
\end{align}

\begin{remark}
The parameters $\eta_i^*$ can be thought of as the normalized co-infection rates. They play a distinguished role in the analysis of the thresholds given below.
\end{remark}

\subsection{The carrying capacity}
The concise meaning of the parameter $K $ becomes clear if we consider the limit case of \eqref{MAIN} when the virus infection is absent, i.e. $I_1=I_2=I_{12}=0$. Then \eqref{submodel2} reduces to the system
\begin{eqnarray}
\label{Verhulst}
S'&=&r(1-\frac{S}{K})S\\
R'&=&-\mu_4'R,\label{Req}
\end{eqnarray}
where the first equation \eqref{Verhulst} is the famous logistic (Verhulst) equation, $r$ is  the \textit{intrinsic rate of natural increase} and
$K$ is the \textit{carrying capacity} of the system.  The carrying capacity $K$ is one of the most fundamental parameters in population dynamics and it usually expresses the upper limit on the size of hypothetical populations, thereby enhancing mathematical stability. In basic ecology one defines carrying capacity as the equilibrium population size. Indeed, coming back to \eqref{MAIN}, we can see that $K$ coincides with the healthy population size for the \textit{disease-free equilibrium}. Mathematically this means that for any positive initial data, the corresponding  solution of \eqref{Verhulst} converges to $K$ as $t\to \infty$. Furthermore, the equilibrium state $G_2:=(K, 0,0,0)$ is the only possible equilibrium point of \eqref{MAIN} with all $I_i=0$.

\subsection{The main result}
Equilibrium points of \eqref{MAIN} are determined by the system
\begin{equation}\label{Equilib}
\begin{split}
(r(1-\frac{S}{K})-\alpha_1I_1-\alpha_2I_2-\alpha_3I_{12})S=0,\\
(\alpha_1S - \eta_1I_{12}-\gamma_1I_2 - \mu_1)I_1=0,\\
(\alpha_2S - \eta_2I_{12}-\gamma_2I_1- \mu_2)I_2=0,\\
(\alpha_3S+ \eta_1I_1+\eta_2I_2-\mu_3)I_{12}+\overline{\gamma}I_1I_2=0.
\end{split}
\end{equation}
It is an elementary to see (see also Proposition~\ref{pro:equil} below for more explicit representations) that except for the trivial equilibrium point
$$G_1=(0,0,0,0)$$ and the disease-free equilibrium $$G_2=(K, 0,0,0),
$$
there exist only $6$ possible equilibrium points:

\begin{itemize}
\item[$\bullet$]
three semi-trivial equilibria $G_3,G_4,G_5$ with \textit{only one nonzero infected class}, i.e. $I_{i}\ne0$ for some $i$;
\item[$\bullet$]
two \textit{coinfected semi-trivial equilibria} $G_6,G_7$ with $I_{12}\ne0$ but $I_1I_2=0$;
\item[$\bullet$]
the \textit{coexistence equilibrium} $G_8$ with $SI_1I_2I_{12}\ne0$.
\end{itemize}

Our main result extends the results obtained in \cite{SKTW18a} on the case of arbitrary values of $\gamma_i$. More precisely, we will prove that we have the following possible scenarios for developing of an equilibrium point as a continuous function of increasing carrying capacity $K$:

\begin{theorem}\label{theo1}
Let us assume that
 \begin{equation}\label{etaeq}
0<\eta_1^*< \max\{1, \eta_2^*\}.
 \end{equation}
 Then there is exactly one locally stable nonnegative equilibrium point. Furthermore, changing the carrying capacity $K$, the type of this locally stable equilibrium point may be exactly  one of the following alternative cases:
\begin{enumerate}[label=(\roman*),itemsep=0.1ex,leftmargin=0.8cm]
        \item \label{it1}for $\eta_1^*< 1$ one has $G_2\rightarrow G_3$.
        More precisely,
        \smallskip
 \begin{itemize}
 \item[$\bullet$] if $0< K<\sigma_1$  then $G_2$ is locally stable;
  \item[$\bullet$] if $K>\sigma_1$ then $G_3$ is  locally stable.
 \end{itemize}
\smallskip
        \item \label{it4}for $1<\eta_1^*<\eta_2^*$ one has $G_2\rightarrow G_3\rightarrow G_6\rightarrow G_5$. More precisely,

        \smallskip
  \begin{itemize}
 \item[$\bullet$] if $0< K< \sigma_1$  then $G_2$ is locally stable;
  \item[$\bullet$] if $\sigma_1<K<  K_1$ then $G_3$ is locally stable;
  \item[$\bullet$] if $K_1<K<  K_2$ the point $G_6$ is locally stable;
  \item[$\bullet$] if $K> K_3$ then the point $G_5$ is locally stable
 \end{itemize}
 where
 $$
  K_1=\frac{\sigma_1\eta_1^*}{\eta_1^*-1},\qquad K_2=\frac{\sigma_3}{\sigma_1}K_1.
  $$

\end{enumerate}
\end{theorem}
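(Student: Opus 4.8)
The plan is to determine, for each value of $K$, the local stability type of every equilibrium listed before the theorem by linearizing \eqref{MAIN}, and then to track how the (unique) stable equilibrium changes as $K$ increases through a sequence of transcritical bifurcations. I would first record the explicit coordinates of $G_2,\dots,G_7$ furnished by Proposition~\ref{pro:equil}; the key structural fact I would exploit is that at each semi-trivial equilibrium the Jacobian of \eqref{MAIN} is block-triangular, so its spectrum splits into a manifestly stable ``internal'' block and a small number of ``transverse'' eigenvalues whose signs are governed by explicit threshold conditions. The thresholds $\sigma_1$, $K_1$, $K_2$ are then precisely the values of $K$ at which one transverse eigenvalue vanishes; at such a $K$ two equilibria collide (one component of the larger one vanishing), which I would verify directly from the coordinates, thereby identifying each arrow $G_i\to G_j$ as a transcritical exchange of stability.

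For $G_2=(K,0,0,0)$ the Jacobian is upper triangular with eigenvalues $-r$ and $\alpha_i(K-\sigma_i)$, $i=1,2,3$; by \eqref{assum} the first to change sign as $K$ grows is $\alpha_1(K-\sigma_1)$, giving the threshold $K=\sigma_1$ and the first arrow $G_2\to G_3$ (this is just Proposition~\ref{pro:dim2} embedded in the full system). At $G_3$ the Jacobian is block lower-triangular: the $(S,I_1)$ block has negative trace and positive determinant for $K>\sigma_1$, the transverse $I_2$-eigenvalue equals $-\alpha_2(\sigma_2-\sigma_1)-\gamma_2 I_1<0$ by \eqref{assum}, and the decisive transverse $I_{12}$-eigenvalue is $\lambda_{12}=\tfrac{r}{\alpha_1}\big(\eta_1(1-\tfrac{\sigma_1}{K})-A_1\big)$, whose sign is controlled by $\eta_1^*(1-\sigma_1/K)$ compared with $1$ (using \eqref{A1}). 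If $\eta_1^*<1$ then $\lambda_{12}<0$ for all $K>\sigma_1$, so $G_3$ stays stable and we are in case \ref{it1}. If $\eta_1^*>1$ then $\lambda_{12}$ changes sign exactly at $K_1=\sigma_1\eta_1^*/(\eta_1^*-1)$, which launches case \ref{it4}.

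The substantive work is the analysis of the coinfected equilibrium $G_6$ (strain $1$ plus coinfection, $I_2=0$). Here I would first observe that the equilibrium equations force $S_6=K(1-1/\eta_1^*)$, so $G_6$ lives in the positive cone precisely for $K_1<K<K_2$ with $K_2=\tfrac{\sigma_3}{\sigma_1}K_1$, and coincides with $G_3$ at $K_1$ and with $G_5$ at $K_2$. The Jacobian again splits: the transverse $I_2$-eigenvalue $\lambda_{I_2}$ decouples, while the internal $(S,I_1,I_{12})$ block $J_3$ carries the rest. A short computation shows that in $J_3$ the two ``off-diagonal'' products in its determinant cancel identically (both equal $\alpha_1\alpha_3\eta_1 S_6 I_1 I_{12}$), so $\det J_3$ reduces to a single negative term and the Routh--Hurwitz conditions for $J_3$ hold automatically; hence the internal block is stable throughout the existence range. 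The stability of $G_6$ therefore hinges entirely on $\lambda_{I_2}$, which I would show is an affine (hence monotone) function of $S_6$, equal to $\tfrac1{\eta_1}\big((\Delta_\alpha+\gamma_2\alpha_3)S_6-(\Delta_\mu+\gamma_2\mu_3)\big)$. Its negativity at the two endpoints follows from \eqref{deltamu2} at $K=K_1$ (where $S_6=\sigma_1$) and from $\Delta_\mu>\Delta_\alpha\sigma_3$ at $K=K_2$ (where $S_6=\sigma_3$); and the latter inequality is exactly the hypothesis $\eta_1^*<\eta_2^*$ rewritten through \eqref{etaAetaA}. Monotonicity then upgrades the two endpoint signs to $\lambda_{I_2}<0$ on all of $(K_1,K_2)$, proving $G_6$ is stable there. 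This endpoint-plus-monotonicity step, together with the cancellation that tames $J_3$, is what I expect to be the main obstacle.

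It remains to complete case \ref{it4} at the upper end and to establish uniqueness. For $G_5=(\sigma_3,0,0,I_{12})$ (which exists for $K>\sigma_3$) the Jacobian is again block-triangular; the transverse $I_1$- and $I_2$-eigenvalues are negative iff $\eta_1^*(1-\sigma_3/K)>1$ and $\eta_2^*(1-\sigma_3/K)>1$, i.e. iff $K>\tfrac{\sigma_3\eta_1^*}{\eta_1^*-1}=K_2$ and $K>\tfrac{\sigma_3\eta_2^*}{\eta_2^*-1}$ respectively. Since $x\mapsto x/(x-1)$ is decreasing and $\eta_1^*<\eta_2^*$, the binding condition is $K>K_2$, so $G_5$ becomes stable exactly at $K_2$; thus the threshold for $G_5$ coincides with $K_2$ and the arrow $G_6\to G_5$ is the transcritical exchange at $K_2$. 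Finally, to see that the stable equilibrium is unique for every non-critical $K$, I would check that the remaining equilibria carry a positive transverse eigenvalue wherever they exist: $G_4$ (strain $2$) has $I_1$-eigenvalue $\alpha_1(\sigma_2-\sigma_1)>0$, while the companion coinfected point $G_7$ and the coexistence point $G_8$ are ruled out similarly using $\sigma_1<\sigma_2<\sigma_3$ together with \eqref{etaeq}, \eqref{deltamu21} and the non-degeneracy \eqref{deltamu}. Assembling the sign information across the ranges $(0,\sigma_1)$, $(\sigma_1,K_1)$, $(K_1,K_2)$, $(K_2,\infty)$ then yields exactly one stable equilibrium on each, reproducing the two transition diagrams.
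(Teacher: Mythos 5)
Your proposal follows essentially the same route as the paper: explicit coordinates for the semi-trivial equilibria, block-triangular Jacobians whose internal blocks are checked to be stable, and transverse eigenvalues whose sign changes at $\sigma_1$, $K_1$, $K_2$ produce the transcritical exchanges $G_2\to G_3\to G_6\to G_5$. Your treatment of $G_6$ is a mild variant of the paper's: you write the transverse eigenvalue as the affine function $\tfrac{1}{\eta_1}\bigl((\Delta_\alpha+\gamma_2\alpha_3)S_6-(\Delta_\mu+\gamma_2\mu_3)\bigr)$ of $S_6$ and verify negativity at the two endpoints $S_6=\sigma_1$ (via \eqref{deltamu2}) and $S_6=\sigma_3$ (via \eqref{etaAetaA} and $\eta_1^*<\eta_2^*$), then invoke monotonicity; the paper instead splits into cases according to the sign of $\Delta_\alpha+\alpha_3\gamma_2$ and compares the root $\hat S_1$ with $\sigma_3$. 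The two arguments rest on the same inequality and yours is arguably tidier since it avoids the case distinction. The supporting computations you cite ($S_6=K(1-1/\eta_1^*)$, the cancellation making the determinant of the internal $3\times3$ block equal to $-\tfrac{r\eta_1^2}{K}S^*I_1^*I_{12}^*<0$, and the identification of $G_5$'s binding threshold with $K_2$ because $x\mapsto x/(x-1)$ is decreasing) all check out and agree with Propositions~\ref{pro:G3}, \ref{pro:G5}, \ref{pro:G6}.

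There is one concrete error, confined to the uniqueness step: you assert that $G_4$ carries the positive transverse eigenvalue $\alpha_1(\sigma_2-\sigma_1)>0$, but the correct eigenvalue is $\alpha_1(\sigma_2-\sigma_1)-\gamma_1 I_2^*$, and Proposition~\ref{pro:G4} shows that for $\gamma^*=\gamma_1/A_3>1$ the point $G_4$ is in fact locally stable on a nonempty range of $K$. Hence ``exactly one locally stable equilibrium'' does not follow from \eqref{etaeq} alone; one needs the standing smallness assumption on $\gamma_1$ (equivalently $\gamma^*<1$) that the paper records in the remark after Proposition~\ref{pro:G4}. Likewise your exclusion of $G_7$ and $G_8$ is asserted rather than argued; the paper is equally terse here (it defers the $G_8$ analysis to the forthcoming bifurcation paper), so this is a shared gap rather than a defect specific to your write-up, but it should be acknowledged if the ``exactly one'' clause is to be claimed in full.
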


\begin{remark}
We consider the remained case
$$
\eta_1^*> \min\{1, \eta_2^*\}
%\eta_1\ge A_1, \qquad \eta_2A_1< \eta_1 A_2.
$$
in the forthcoming paper \cite{forthcoming}. This requires a delicate bifurcation analysis with application of methods similar to the principle of the exchange of stability developed in \cite{CrandallRabinowitz1}; see also \cite{DiekmannGetto} and \cite{Boldin} for recent applications in population analysis.
We will show that in the remained cases one has the following two transition diagrams:
\begin{enumerate}[label=(\roman*),itemsep=0.1ex,leftmargin=0.8cm]
            \item[(iii)] \label{it2} $G_2 \rightarrow G_3\rightarrow G_6\rightarrow G_8\rightarrow G_7\rightarrow G_5$;
        \item[(iv)] \label{it3}$G_2\rightarrow G_3\rightarrow G_6\rightarrow G_8$.
    \end{enumerate}
    Furthermore, $G_8$ may loose stability for large $K$ and small $\gamma_i$ in the latter case.
\end{remark}

\begin{remark}
In particular, the above result implies that there are only three possible `final destination' equilibrium states, namely $G_3, G_5$ and $G_8$. The latter may be interpreted as follows: taking the carrying capacity $K$ as a random variable in $(0,\infty)$, one concludes that a random choice of $K$ corresponds to `large enough' values of $K$, thus implying that, in average, the system \eqref{submodel2} will have either of the three of the possible finishing scenarios $G_3, G_5$ and $G_8$. This is interesting from the epidemiological point of view: this means that if $R_0>1$, the \textit{most expected equilibrium states} (in the aforementioned meaning) is either of the following: the state with the presence of only the first strain $G_3$, purely coinfected case $G_5$, or the state with the presence of all possible strains.
\end{remark}

\section{Basic properties of equilibrium points}\label{equipoint}
First we discuss some general results and equilibrium point analysis for \eqref{submodel2}.
%\begin{lemma}
%	If $S \neq 0$ and $Y_1,Y_2,Y_{12} \neq 0$ is an elements of
%\end{lemma}

\subsection{A priori bounds}
In this section we discuss only stable equilibrium points with nonnegative coordinates. We denote
$$
Y=(S, I_1,I_2,I_{12}).
$$
In what follows, by an \textit{equilibrium point} we always mean an equilibrium $Y$ of \eqref{MAIN} point with nonnegative coordinates, $Y=(S, I_1,I_2,I_{12})\ge0$.

In the next sections we identify all equilibria of the system \eqref{MAIN}
and determine their local stability properties. First, let us remark some useful  relations which hold for \textit{any} nonnegative equilibrium point of \eqref{MAIN}.

\begin{lemma}\label{lem:equi}
Let $Y=(S,I_1,I_2,I_{12})\ne (0,0,0,0)$ be a nontrivial equilibrium point of \eqref{MAIN} with nonnegative coordinates. Then
\begin{equation}\label{Ystar}
	0<S\le K,
\end{equation}
and the right equality holds if and only if $I_1=I_2=I_{12}=0$, i.e. precisely when
$$
Y=G_2:=(K,0,0,0).
$$
Furthermore,
\begin{equation}\label{zineq}
	\sigma_1\le S\le \min\{K,\sigma_3\},
\end{equation}
unless $Y=G_2$.

\end{lemma}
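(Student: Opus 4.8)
The plan is to run a short case analysis on which of the coordinates $I_1,I_2,I_{12}$ vanish, using the factored form of the equilibrium system \eqref{Equilib} together with the positivity of the fundamental rates ($\mu_i>0$, $\alpha_i>0$). First I would establish $S>0$: if $S=0$, the second and third equations of \eqref{Equilib} read $-(\eta_1 I_{12}+\gamma_1 I_2+\mu_1)I_1=0$ and $-(\eta_2 I_{12}+\gamma_2 I_1+\mu_2)I_2=0$, and since the bracketed factors are strictly negative (because $\mu_1,\mu_2>0$ and the remaining terms are nonnegative) we get $I_1=I_2=0$; the fourth equation then gives $-\mu_3 I_{12}=0$, so $I_{12}=0$ as well, forcing $Y=G_1$ and contradicting nontriviality. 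Hence $S>0$, and I may cancel $S$ in the first equation of \eqref{Equilib} to obtain
\[
r\Bigl(1-\tfrac{S}{K}\Bigr)=\alpha_1 I_1+\alpha_2 I_2+\alpha_3 I_{12}\ge 0 .
\]
Since $r>0$ this yields $S\le K$, with equality exactly when the right-hand side vanishes; as all $\alpha_i>0$ and $I_i\ge 0$, that happens if and only if $I_1=I_2=I_{12}=0$, i.e. $Y=G_2$. This proves \eqref{Ystar} together with its equality clause.

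For the sharper bounds \eqref{zineq} I would assume $Y\ne G_2$, so that at least one infected class is positive. For the lower bound $S\ge\sigma_1$: if $I_1>0$, the second equation of \eqref{Equilib} gives $\alpha_1 S=\mu_1+\eta_1 I_{12}+\gamma_1 I_2\ge\mu_1$, hence $S\ge\sigma_1$; if instead $I_1=0$ but $I_2>0$, the third equation gives $\alpha_2 S=\mu_2+\eta_2 I_{12}\ge\mu_2$, so $S\ge\sigma_2>\sigma_1$; and if $I_1=I_2=0$ but $I_{12}>0$, the fourth equation gives $\alpha_3 S=\mu_3$, so $S=\sigma_3>\sigma_1$. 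In every case $S\ge\sigma_1$.

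For the upper bound $S\le\sigma_3$ I would split on whether $I_{12}$ vanishes. If $I_{12}>0$, dividing the fourth equation of \eqref{Equilib} by $I_{12}$ gives
\[
\alpha_3 S-\mu_3=-\eta_1 I_1-\eta_2 I_2-\frac{\overline{\gamma}\,I_1 I_2}{I_{12}}\le 0 ,
\]
so $S\le\sigma_3$ immediately. If $I_{12}=0$, the fourth equation reduces to $\overline{\gamma}\,I_1 I_2=0$; here I would argue that $I_1$ and $I_2$ cannot both be positive, since otherwise the second and third equations would force $S=\sigma_1$ and $S=\sigma_2$ simultaneously (directly when $\overline{\gamma}=0$, and via $I_1 I_2=0$ when $\overline{\gamma}>0$), contradicting the non-degeneracy $\sigma_1\ne\sigma_2$. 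Thus exactly one of $I_1,I_2$ is nonzero, and the corresponding equation gives $S=\sigma_1$ or $S=\sigma_2$, both strictly below $\sigma_3$. Combining with $S\le K$ from the first part yields $S\le\min\{K,\sigma_3\}$, completing \eqref{zineq}.

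I expect the only delicate bookkeeping to be the $I_{12}=0$ branch of the upper bound, where one must invoke $\sigma_1\ne\sigma_2$ to exclude a spurious equilibrium carrying two active single-strain classes but no coinfection. Everything else reduces to sign considerations and the strict positivity of the rate constants $\mu_i$ and $\alpha_i$, so the argument is entirely elementary once the correct case split is fixed.
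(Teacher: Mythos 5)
Your proof is correct, and the first half (positivity of $S$, the bound $S\le K$ and its equality clause) coincides with the paper's argument. For the bounds $\sigma_1\le S\le\sigma_3$ in \eqref{zineq}, however, you take a genuinely different route. The paper sums all four equations of \eqref{Equilib} (the cross terms in $\eta_i$ and $\gamma_i$ cancel) to get the conservation identity $\mu_1I_1+\mu_2I_2+\mu_3I_{12}=\frac{r}{K}(K-S)S$, divides it by \eqref{law1}, and reads off
$$
S=\frac{\mu_1I_1+\mu_2I_2+\mu_3I_{12}}{\alpha_1I_1+\alpha_2I_2+\alpha_3I_{12}}
=\frac{\sum_i\sigma_i\,\alpha_iI_i}{\sum_i\alpha_iI_i},
$$
a convex combination of the $\sigma_i$, so $\sigma_1\le S\le\sigma_3$ falls out in one line with no case distinctions and no appeal to $\sigma_1\ne\sigma_2$. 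You instead run a case analysis on which of $I_1,I_2,I_{12}$ vanish, extracting the bound from whichever factored equation has a nonzero multiplier; this is more elementary and has the side benefit of pinning down the exact value of $S$ on each boundary stratum ($S=\sigma_1$, $\sigma_2$ or $\sigma_3$ when only one infected class survives), which anticipates the explicit formulas of Proposition~\ref{pro:equil}. The price is the extra bookkeeping you correctly identify: in the subcase $I_{12}=0$, $\overline{\gamma}=0$ you must invoke $\sigma_1\ne\sigma_2$ to exclude both single-strain classes being simultaneously active, a step the paper's mediant argument never needs. One small point worth making explicit in your write-up: the assertion that $Y\ne G_2$ forces at least one infected class to be positive is itself a consequence of the equality clause you proved in the first part (all $I_i=0$ together with $S>0$ forces $S=K$).
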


\begin{proof}
Let $S=0$. Then we have from the second equation of \eqref{Equilib} that $(\eta_1I_{12}+\gamma_1I_2 + \mu_1)I_1=0$, where the nonnegativity assumption gives $\eta_1I_{12}+\gamma_1I_2 + \mu_1\ge \mu_1>0$, hence  $I_1=0$. For the same reason, $I_2=0$, thus the last equation in \eqref{Equilib} yields
$\mu_3I_{12}=0, $  hence $I_{12}=0$ too. This proves that $Y=(0,0,0,0)$, hence implying the left inequality in \eqref{Ystar}.

Now assume that $Y=(S,I_1,I_2,I_{12})\ne (0,0,0,0)$ is an equilibrium point. Since $S\ne 0$, we have from the first equation of \eqref{Equilib}  that
\begin{equation}\label{law1}
\alpha_1I_1+\alpha_2I_2+\alpha_3I_{12}=\frac{r(K-S)}{K}.
\end{equation}
In particular, the nonnegativity of the left hand side in the latter identity implies that $K-S\ge0$, i.e. proving the right inequality in \eqref{Ystar}.
On the other hand, summing up  equations in \eqref{Equilib} we obtain
	\begin{equation}
	\mu_1I_1+\mu_2I_2+\mu_3I_{12}=\frac{r(K-S)S}{K}.
	\label{law2}
	\end{equation}
Assuming that $S\ne K$ and dividing  \eqref{law2} by  \eqref{law1} we get $$S=\frac{\mu_1I_1+\mu_2I_2+\mu_3I_{12}}{\alpha_1I_1+\alpha_2I_2+\alpha_3I_{12}}
$$
which readily yields \eqref{zineq}.
\end{proof}

This implies, in particular

\begin{corollary}\label{cor:Sstar}
 For any equilibrium point $Y\neq(0,0,0,0)$ and $Y\neq G_2$ there holds $K\geq \sigma_1$.
\end{corollary}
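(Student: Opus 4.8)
The plan is to read this claim off directly from Lemma~\ref{lem:equi}, which has just been established, so the argument is essentially a matter of bookkeeping. First I would observe that the two hypotheses of the corollary---namely $Y\neq(0,0,0,0)$ and $Y\neq G_2$---are precisely the conditions under which the sandwich bound \eqref{zineq} of the lemma is valid. Indeed, the first hypothesis qualifies $Y$ as a \emph{nontrivial} equilibrium, so the lemma applies at all, and the second hypothesis is exactly the exclusion ``$\text{unless } Y=G_2$'' appearing in \eqref{zineq}. Thus for such an equilibrium point we may invoke
$$
\sigma_1 \le S \le \min\{K, \sigma_3\}.
$$

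The only remaining step is to discard the $\sigma_3$ alternative on the right. Since $\min\{K,\sigma_3\}\le K$ by the definition of the minimum, concatenating the two inequalities yields $\sigma_1\le S\le K$, and in particular $\sigma_1\le K$, which is the desired conclusion $K\ge\sigma_1$.

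There is no genuine obstacle here: the corollary is a transparent consequence of \eqref{zineq}, and the entire analytic content of the statement already resides in the lemma (where the lower bound on $S$ was obtained by dividing the summed relation \eqref{law2} by the first-equation relation \eqref{law1}). The one point that warrants a moment's care is making sure the exclusion $Y\neq G_2$ is invoked at the right place: it is what licenses the use of \eqref{zineq} rather than merely \eqref{Ystar}, since the lower bound $\sigma_1\le S$ need not hold at $G_2$ itself. Once $G_2$ is ruled out, the lower bound holds unconditionally and the upper bound is automatically no larger than $K$, completing the proof.
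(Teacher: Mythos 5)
Your argument is correct and coincides with the paper's own reasoning: the corollary is stated there as an immediate consequence of Lemma~\ref{lem:equi} ("This implies, in particular"), obtained exactly as you do by chaining $\sigma_1\le S\le\min\{K,\sigma_3\}\le K$ from \eqref{zineq} once $Y=(0,0,0,0)$ and $Y=G_2$ are excluded. Nothing further is needed.
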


Notice that for $G_2$, all $I_i=0$, otherwise we have

\begin{corollary}

If an equilibrium point $Y$ is distinct from $G_2:=(K,0,0,0)$ then \eqref{law1} implies the following a priori bound on the $I$-coordinates:
	\begin{equation}\label{max}
	\sigma_1\le S\le \sigma_3, \qquad 0\le I_i\le \frac{r}{\alpha_i}, \qquad i=1,2,3,
	\end{equation}
where $r=r$ is  the intrinsic rate of natural increase.
In other words, any equilibrium point distinct  from $G_2$ lies inside a block with sides depending only on the fundamental constants.
\end{corollary}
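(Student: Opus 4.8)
The plan is to read both bounds off directly from the identities already in hand, namely \eqref{law1} and the a priori interval \eqref{zineq} from Lemma~\ref{lem:equi}; no new estimates are required, and the whole argument is a short bookkeeping exercise once the index convention is fixed (the third infected coordinate is understood to be $I_3=I_{12}$, carrying the transmission rate $\alpha_3$). First I would record the standing assumption that $Y$ is a \emph{genuinely nontrivial} equilibrium, $Y\ne(0,0,0,0)$, in addition to $Y\ne G_2$. This is needed because the claimed lower bound $\sigma_1\le S$ is already false for the trivial point, and it is consistent with the convention of this section that only nontrivial (indeed stable) equilibria are of interest. Under this assumption Lemma~\ref{lem:equi} applies in full.

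The bound on $S$ is then immediate: \eqref{zineq} reads $\sigma_1\le S\le\min\{K,\sigma_3\}$, and since $\min\{K,\sigma_3\}\le\sigma_3$ this gives exactly $\sigma_1\le S\le\sigma_3$. For the infected coordinates, the lower bound $I_i\ge0$ is nothing but the nonnegativity hypothesis. For the upper bound I would exploit that every summand on the left of \eqref{law1} is nonnegative, since all $\alpha_i>0$ and all $I_i\ge0$; hence each term is dominated by the full sum,
$$\alpha_i I_i \le \alpha_1 I_1+\alpha_2 I_2+\alpha_3 I_{12}=\frac{r(K-S)}{K},\qquad i=1,2,3.$$
It then remains only to estimate the right-hand side by $r$: from \eqref{Ystar} we have $0<S\le K$, whence $0\le K-S\le K$ and therefore $0\le\frac{r(K-S)}{K}\le r$ (using $r>0$). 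Combining the two displays yields $\alpha_i I_i\le r$, i.e. $I_i\le r/\alpha_i$ for each $i$, which is the remaining assertion of \eqref{max}.

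I do not expect any genuine obstacle here, as the estimate is elementary; the only points requiring a little care are precisely the boundary bookkeeping. One must make sure to exclude the trivial equilibrium (so that $\sigma_1\le S$ holds via \eqref{zineq}), and one must use the correct correspondence $I_3\leftrightarrow I_{12}$ with rate $\alpha_3$ so that the single identity \eqref{law1} delivers all three upper bounds at once. The resulting ``block'' has sides $\sigma_3$ and $r/\alpha_i$, all of which depend only on the fundamental parameters, exactly as claimed.
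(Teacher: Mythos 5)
Your proof is correct and follows exactly the argument the paper intends (the corollary is stated without an explicit proof, but it is read off from \eqref{zineq} and the nonnegativity of each summand in \eqref{law1} in precisely the way you describe). Your remark that the trivial equilibrium $(0,0,0,0)$ must also be excluded for the lower bound $\sigma_1\le S$ to hold is a valid and worthwhile point of bookkeeping that the paper's statement glosses over.
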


%\subsection{Stability of equilibrium points}
%A closer analysis reveals that the reduced system \eqref{MAIN} has one trivial equilibrium point (the origin), seven boundary equilibrium points and a coexistence equilibrium point.
The trivial equilibrium point  $G_1=(0,0,0,0)$ is the equilibrium of no disease or susceptible and the standard (local asymptotic) stability treatment  shows that this point is always unstable.
The first nontrivial equilibrium point $G_2$ is {the} \textit{disease-free equilibrium}, i.e
$$%\begin{equation}\label{subsdstar}
G_2=(K, 0,0,0)
$$%\end{equation}
and it always exist (for any admissible values of the fundamental parameters).  The argument of \cite{SKTW18a} is also applicable in the present case because the stability analysis for $G_2$ does not involve $\gamma_i$, so it is literally equivalent to that given in \cite{SKTW18a}. Repeating this argument (see section~8 in \cite{SKTW18a}) readily yields the following criterium.

\begin{proposition}\label{pro:G2}
The following three conditions are equivalent:
\begin{enumerate}[label=(\alph*),itemsep=0.4ex,leftmargin=0.8cm]
\item
the disease-free equilibrium point $G_2$ is locally stable;
\item
the disease-free equilibrium point $G_2$ is globally (asymptotically) stable;
\item
$0< K<\sigma_1.$
\end{enumerate}

	\end{proposition}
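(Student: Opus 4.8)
The plan is to prove the cycle of implications $(c)\Rightarrow(b)\Rightarrow(a)\Rightarrow(c)$, which is the cleanest way to establish the triple equivalence. The implication $(b)\Rightarrow(a)$ is trivial, since global asymptotic stability immediately entails local stability, so the real work lies in $(a)\Rightarrow(c)$ and $(c)\Rightarrow(b)$.

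For the linear-stability direction $(a)\Rightarrow(c)$ (equivalently, I would prove the contrapositive of $(c)\Rightarrow(a)$ together with the converse), I would compute the Jacobian $J$ of the reduced system \eqref{MAIN} at $G_2=(K,0,0,0)$. Because every infected coordinate vanishes at $G_2$, the terms involving $\eta_i$, $\gamma_i$ and $\overline{\gamma}$ drop out of the linearization, which is exactly why the analysis does not depend on the $\gamma_i$ and the argument of \cite{SKTW18a} carries over verbatim. The Jacobian is block-triangular: the $S$-equation contributes a diagonal entry $-r$ (from differentiating $r(1-S/K)S$ at $S=K$), while the three infected equations each contribute a diagonal entry of the form $\alpha_i K-\mu_i$ for $i=1,2$ and $\alpha_3 K-\mu_3$ for the coinfected class. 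Hence the eigenvalues are simply
\begin{equation*}
\lambda_0=-r<0,\qquad \lambda_i=\alpha_i K-\mu_i=\alpha_i(K-\sigma_i),\quad i=1,2,3.
\end{equation*}
Local stability holds precisely when all $\lambda_i<0$, i.e. when $K<\sigma_i$ for every $i$; under the standing hypothesis \eqref{assum} that $\sigma_1<\sigma_2<\sigma_3$, this is equivalent to $K<\sigma_1$, which together with $K>0$ gives condition $(c)$. This establishes the equivalence of $(a)$ and $(c)$.

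The substantive part is the global direction $(c)\Rightarrow(b)$: assuming $0<K<\sigma_1$, I would show every nonnegative trajectory converges to $G_2$. First I would invoke Lemma~\ref{lem:equi} and its corollaries: any equilibrium distinct from $G_1$ and $G_2$ must satisfy $\sigma_1\le S$, which is incompatible with the a~priori bound $S\le K<\sigma_1$; hence $G_2$ is the only nonnegative equilibrium besides the unstable $G_1$. The natural mechanism for global convergence is that each infected class is driven to extinction: from the $I_i$-equations, whenever $S<\sigma_i$ the per-capita growth rate $\alpha_i S-\mu_i$ is negative, and I would use the a~priori bound $S\le K<\sigma_1<\sigma_2<\sigma_3$ to conclude that all three infected populations decay. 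A clean way to make this rigorous is a Lyapunov argument, e.g. tracking a suitable linear combination of the $I_i$ (or working with $V=I_1+I_2+I_{12}$ and showing $V'\le(\alpha_3 K-\mu_3)V + \overline{\gamma}I_1I_2$ and bootstrapping) to prove $I_1,I_2,I_{12}\to0$; once the infected classes vanish in the limit, the $S$-equation reduces asymptotically to the logistic equation \eqref{Verhulst}, whose solutions converge to $K$, giving $Y\to G_2$.

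The main obstacle I anticipate is the global step, and specifically the coinfected class: the bilinear source term $\overline{\gamma}I_1I_2$ in the $I_{12}'$ equation means $I_{12}$ is not governed by a simple linear decay inequality, so naively bounding its growth fails. I would handle this by first establishing that $I_1$ and $I_2$ decay exponentially (their equations have no production terms, only the negative rate $\alpha_i S-\mu_i<0$ plus further negative contributions $-\eta_i I_{12}-\gamma_i I_j$), so that the product $I_1 I_2$ is integrable and eventually negligible; feeding this decaying forcing into the $I_{12}$ equation, whose homogeneous rate $\alpha_3 S-\mu_3\le\alpha_3 K-\mu_3=\alpha_3(K-\sigma_3)<0$ is also strictly negative, then forces $I_{12}\to0$ as well. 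Since the paper explicitly states this matches the treatment in \cite{SKTW18a}, I would ultimately cite that reference for the detailed global argument rather than reproduce every estimate, but the exponential-decay-with-forcing scheme above is the route I would take to justify it.
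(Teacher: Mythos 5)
Your proposal is correct and is essentially the argument the paper intends: the paper itself offers no proof beyond observing that the linearization at $G_2$ does not involve the $\gamma_i$ and citing Section~8 of \cite{SKTW18a}, and your block-triangular Jacobian computation (eigenvalues $-r$ and $\alpha_i(K-\sigma_i)$) together with the exponential-decay-of-$I_1,I_2$ followed by decay-with-decaying-forcing for $I_{12}$ is exactly the standard argument that citation stands for. One small point to tighten: the bound $S\le K$ in Lemma~\ref{lem:equi} is an a~priori bound on \emph{equilibria}, not on trajectories, so for the global step you should instead derive $\limsup_{t\to\infty}S(t)\le K$ from the differential inequality $S'\le r(1-S/K)S$ and then work with $S(t)\le K+\varepsilon<\sigma_1$ for large $t$.
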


\begin{remark}
The latter proposition is completely consistent with the dichotomy of the $R_0$-number (the reproduction number, sometimes called basic reproductive ratio). Recall that in epidemiology, the basic reproduction number of an infection can be thought of as the number of cases one case generates on average over the course of its infectious period, in an otherwise uninfected population. In our case, using the formal definition (see for example \cite{Diekmann1990}), one has
$$
R_0=\max\{\frac{K}{\sigma_i}:1\le i\le 3\}=\frac{K}{\sigma_1},
$$
using the fact that the first strain is the most inclined to spread.

In this notation, $R_0<1$ corresponds exactly to the scenario when the infection will die out in the long run (i.e. the only asymptotically stable equilibrium state is the disease-free equilibrium point $G_2$), while $R_0>1$ means the infection will be able to spread in a population. Therefore, in what follows, we shall focus on the nontrivial case $R_0>1$ with different scenario admitting the equilibrium states with some of $I_1,I_2,I_{12}$ nonzero.
\end{remark}

\subsection{Explicit representations of equilibrium points}

Coming back to \eqref{Equilib}, note that the Bezout theorem yields (in generic setting) that a quadratic system with four equations and four independent variables has  $2^{4}=16$ distinct solutions (counting the identically zero solution $(0,0,0,0)$). In fact, in our case we have only one-half of the relevant  (the Bezout number) solutions. More precisely, we have

\begin{proposition}\label{pro:equil}
Except for the trivial equilibrium $G_1=(0,0,0,0)$ and the disease-free equilibrium $G_2=(K, 0,0,0)$ there exist only the following  equilibrium states:
\begin{align}\label{subeqpt3}
G_3  &=\left(\sigma_1, I_1, 0,0\right),\quad I_1 =\frac{r}{\alpha_1}(1- \frac{\sigma_1}{K}),\\
\label{G4coor}
G_4& =(\sigma_2,0,I_2, 0),\quad I_2:=\frac{r}{\alpha_2}(1- \frac{\sigma_2}{K}),\\
\label{subeqpt5}
G_5 &=(\sigma_3,0,0,I_{12})\quad I_{12}=\frac{r}{\alpha_2}(1- \frac{\sigma_3}{K}),\\
\label{subeqpt6}
G_6&=(S,I_1,0,I_{12}), \,\,\, S=\frac{\sigma_1K}{K_1}, \,\,\,
I_1=\frac{\mu_3}{\eta_1}(1-\frac{K}{K_2}),\,\,\,%\nonumber\\
I_{12}=\frac{\mu_1}{\eta_1}(\frac{K}{K_1}-1),\\
G_7&=(S,0,I_2,I_{12}), \,\,\,
S=\frac{\sigma_1K}{K_3},\,\,
I_2=\frac{\mu_3}{\eta_2}(1-\frac{K}{K_4}),\,\,\,
I_{12}=\frac{\mu_2}{\eta_2}(\frac{K}{K_3}-1),\label{subeqpt7}\\
G_8&=(S,I_1,I_2,I_{12}),\label{subeqpt8}
\end{align}
where
 $$
  K_3=\frac{\sigma_2\eta_2^*}{\eta_2^*-1},\qquad K_4=\frac{\sigma_3}{\sigma_2}K_3.
  $$
  and there may exist at most two distinct points of type $G_8$.
\end{proposition}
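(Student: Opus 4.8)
The plan is to solve the equilibrium system \eqref{Equilib} by a systematic case analysis on the vanishing pattern of the three infected coordinates $(I_1,I_2,I_{12})$. By Lemma~\ref{lem:equi}, every equilibrium other than $G_1$ has $S>0$, so the first equation of \eqref{Equilib} may be divided by $S$ and replaced by the affine relation
\begin{equation}\label{plan:constr}
\alpha_1 I_1+\alpha_2 I_2+\alpha_3 I_{12}=r\Bigl(1-\tfrac{S}{K}\Bigr).
\end{equation}
The second and third equations factor through $I_1$ and $I_2$ respectively, while the fourth reads $(\alpha_3 S+\eta_1 I_1+\eta_2 I_2-\mu_3)I_{12}+\overline{\gamma}I_1I_2=0$. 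Thus it suffices to run through the $2^3=8$ sign patterns of $(I_1,I_2,I_{12})$, in each case discarding the vanishing variables and solving what remains.

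First I would treat the patterns with at most one nonzero infected class. If all three vanish, \eqref{plan:constr} forces $S=K$, giving $G_2$. If exactly one of $I_1,I_2,I_{12}$ is nonzero, the surviving factor in the corresponding equation must vanish, which pins $S$ to $\sigma_1,\sigma_2$ or $\sigma_3$; inserting this into \eqref{plan:constr} then determines the single nonzero coordinate and produces exactly $G_3,G_4,G_5$. The crucial observation in the two-nonzero patterns is that $I_1I_2\neq0$, $I_{12}=0$ is \emph{impossible}: the fourth equation collapses to $\overline{\gamma}I_1I_2=0$, contradicting $\overline{\gamma}=\gamma_1+\gamma_2>0$. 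This is precisely the term absent in the Lotka--Volterra reduction of \cite{SKTW18a}, and it is what removes one equilibrium branch here. The remaining two-nonzero patterns each leave three genuinely linear equations in three unknowns, whose unique solutions are $G_6$ and $G_7$; the explicit coordinates follow by Cramer's rule together with a routine rearrangement in terms of $\sigma_i,\eta_i^{*}$ and the constants $K_1,\dots,K_4$.

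The only case producing more than one solution is the coexistence pattern $SI_1I_2I_{12}\neq0$, i.e.\ $G_8$. Here \eqref{plan:constr} together with the two surviving factors
\begin{equation}\label{plan:lin}
\alpha_1 S-\gamma_1 I_2-\eta_1 I_{12}=\mu_1,\qquad \alpha_2 S-\gamma_2 I_1-\eta_2 I_{12}=\mu_2,
\end{equation}
constitute three linear equations in the four unknowns $(S,I_1,I_2,I_{12})$. I would first check that their coefficient matrix has rank $3$: in any linear dependence with weights $(a,b,c)$ the $I_1$- and $I_2$-columns give $a\alpha_1=c\gamma_2$ and $a\alpha_2=b\gamma_1$, so if $a\neq0$ the $S$-column sum $a\frac{r}{K}+b\alpha_1+c\alpha_2$ would inherit the nonzero sign of $a$ (as $r,K,\alpha_i>0$), which is impossible; hence $a=0$, and the $S$- and $I_{12}$-columns then form a $2\times2$ system of determinant $-\Delta_\alpha\neq0$ by \eqref{deltamu}, forcing $b=c=0$. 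Consequently the solution set is a single affine line, which I parametrize so that $S,I_1,I_2,I_{12}$ become affine functions of one scalar. Substituting into the fourth equation of \eqref{Equilib} then yields a polynomial of degree at most $2$ in that scalar, whence there are at most two points of type $G_8$.

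The main obstacle is this last step. One must certify that the linear system \eqref{plan:constr}--\eqref{plan:lin} really has rank $3$ (so that the coexistence equilibria form a finite set rather than a curve) and, just as importantly, that the quadratic obtained after substitution does not degenerate into the zero polynomial, as otherwise infinitely many coexistence states could appear. The rank computation is where the non-degeneracy hypothesis $\Delta_\alpha\neq0$ is essential; verifying that the leading coefficient along the parametrized line is nonzero is the more delicate bookkeeping, and it is exactly what secures the bound ``at most two'' asserted in the statement.
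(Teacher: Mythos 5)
Your proposal is correct and follows essentially the same route as the paper: a case analysis on which of $I_1,I_2,I_{12}$ vanish (with the term $\overline{\gamma}I_1I_2$ killing the pattern $I_1I_2\neq0$, $I_{12}=0$), the two-nonzero patterns solved as linear systems giving $G_6,G_7$, and the coexistence count reduced to a degree-$2$ condition in one scalar. The only difference is cosmetic: you restrict the quadratic fourth equation to the solution line of the rank-$3$ linear subsystem (proving the rank claim via $\Delta_\alpha\neq0$, which the paper leaves implicit), whereas the paper first eliminates the quadratic term by summing all four equations and then imposes the vanishing of the $4\times4$ determinant $P(S)$ in \eqref{Peq2} — the same quadratic in $S$; the potential degeneration of that quadratic (it does drop to linear when $\gamma_1=\gamma_2=0$) is left unexamined in both your sketch and the paper's proof.
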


\begin{proof}
Let $Y=(S,I_1,I_2,I_{12})\ne G_1, G_2$ be an equilibrium point. Then by Lemma~\ref{lem:equi} $S>0$ and by the assumption some of coordinates $I_1,I_2,I_{12}$ must be distinct from zero. First assume that $I_{12}=0$. Then the last equation in \eqref{Equilib} implies $I_1I_2=0$. By the made assumption this implies that exactly one of $I_1$ and $I_2$ is nonzero while another vanishes. This yields $G_3$ and $G_4$ in \eqref{subeqpt3} and \eqref{subeqpt4}, respectively. Now, let $I_{12}\ne0$ but $I_1I_2=0$. Then the last equation in \eqref{Equilib} implies $\alpha_3S+ \eta_1I_1+\eta_2I_2-\mu_3=0$. An elementary analysis reveals exactly three possible points $G_5, G_6$ and $G_7$ in \eqref{subeqpt5}--\eqref{subeqpt7}. Finally, consider the case when all coordinates of $Y$ are distinct from zero. Since $Y$ is distinct from $G_1$ and $G_2$, it must satisfy \eqref{law1}, \eqref{law2}. Also, since $I_1,I_2\ne0$,  we obtain from the second and the third equations \eqref{Equilib} the following system:
\begin{align*}
\mu_1I_1+\mu_2I_2+\mu_3I_{12}&=\frac{r}{K}(K-S)S,\\
\alpha_1I_1+\alpha_2I_2+\alpha_3I_{12}&=\frac{r}{K}(K-S),\\
\alpha_1S-\gamma_1I_2 - \eta_1I_{12} - \mu_1&=0,\\
\alpha_2S-\gamma_2I_1 - \eta_2I_{12}- \mu_2&=0.
\end{align*}
Rewriting  these four equations in the matrix form as follows
\begin{equation}\label{PSmat}
\left(
\begin{matrix}
\mu_1 & \mu_2 & \mu_3 & \frac{r}{K}(S-K)S \\
\alpha_1 & \alpha_2 & \alpha_3 &\frac{r}{K}(S-K) \\
0 & \gamma_1 & \eta_1 & \mu_1-\alpha_1S \\
\gamma_2 & 0 & \eta_2 & \mu_2-\alpha_2S \\
\end{matrix}
\right)
\left(
\begin{matrix}
I_1 \\
I_2 \\
I_{12}\\
1\\
\end{matrix}
\right)=
\left(
\begin{matrix}
0 \\
0 \\
0\\
0\\
\end{matrix}
\right)
\end{equation}
we conclude that $(I_1,I_2,I_{12},1)^T$ is a $0$-eigenvector of the matrix in the left hand side of \eqref{PSmat}, thus, the first coordinate $S$  satisfies the  determinant equation
$$
P(S):=p_2S^2+p_1S+p_0=0,
$$
where
\begin{equation}\label{Peq2}
P(S):=
\begin{vmatrix}
\mu_1 & \mu_2 & \mu_3 & \frac{r}{K}(S-K)S \\
\alpha_1 & \alpha_2 & \alpha_3 & \frac{r}{K}(S-K) \\
0 & \gamma_1 & \eta_1 & \mu_1-\alpha_1S \\
\gamma_2 & 0 & \eta_2 & \mu_2-\alpha_2S \\
\end{vmatrix}
\end{equation}
and
$$
p_0=\begin{vmatrix}
\mu_1 & \mu_2 & \mu_3 & 0 \\
\alpha_1 & \alpha_2 & \alpha_3 & \mu_0-b \\
0 & \gamma_1 & \eta_1 & \mu_1 \\
\gamma_2 & 0 & \eta_2 & \mu_2 \\
\end{vmatrix}
,\quad p_1=\begin{vmatrix}
\mu_1 & \mu_2 & \mu_3 & \mu_0-b \\
\alpha_1 & \alpha_2 & \alpha_3 &\frac{r}{K} \\
0 & \gamma_1 & \eta_1 & -\alpha_1 \\
\gamma_2 & 0 & \eta_2 & -\alpha_2 \\
\end{vmatrix}
,\quad p_2=\begin{vmatrix}
\mu_1 & \mu_2 & \mu_3 & \frac{r}{K} \\
\alpha_1 & \alpha_2 & \alpha_3 & 0 \\
0 & \gamma_1 & \eta_1 & 0 \\
\gamma_2 & 0 & \eta_2 & 0 \\
\end{vmatrix}
$$
In particular, it follows that $P(S)$ is a quadratic polynomial in $S$, therefore there may be at most two distinct inner points of type $G_8$. The condition $P(S)=0$ is sufficient if $\gamma_1,\gamma_2<\frac{\Delta_\alpha}{\alpha_3}$.
\qed
\end{proof}

It follows from Proposition~\ref{pro:equil} that all the boundary (edge) points are uniquely determined and can be expressed very explicitly. The existence and uniqueness of coexistence (inner) points of type $G_8$ is more involved (in contrast with the Lotka-Volterra case $\bar \gamma=0$) and depends on the value of $\bar \gamma$.

We study the existence and the local stability  of inner points by a bifurcation approach in the forthcoming paper \cite{forthcoming}.
Notice also that in the particular case $\gamma_i=0$, the characteristic polynomial \eqref{Peq2} becomes a linear function expressed explicitly by
\begin{align*}
P(S)|_{\gamma_1=\gamma_2=0}&=
%(\mu_1\alpha_2-\mu_2\alpha_1)(\eta_1\mu_2-\eta_2\mu_1 -S(\eta_1\alpha_2-\eta_2\alpha_1))
\alpha_1\alpha_2(\sigma_1-\sigma_2)(\Delta_\mu-S\Delta_\alpha)
%\\&=-\alpha_1\alpha_2(\sigma_2-\sigma_1) (\eta_1(\sigma_2-\sigma_1)\alpha_2-(S-\sigma_1)\Delta_\alpha),
\end{align*}
where we used the notation in \eqref{deltamu1}. This considerably simplifies the analysis, see \cite{SKTW18a}.

%Let $\Gamma$ be the boundary of $\Bbb R^4$, i.e.
%$$
%\Gamma=\{x=(x_1,x_2,x_3,x_4)\,:\,x_j\geq 0, j=1,\ldots,4,\;x_1x_2x_3x_4=0\}.
%$$

\begin{lemma}
The following holds:

\begin{itemize}
\item[(i)] For each $G_j$, $j=1,2,3,5$, there exists $\varepsilon>0$ (depending on the fundamental parameters $\alpha_i, \mu_i$, $\eta_i$ and $\gamma_i$) such that $\|G_j-G_8\|\geq\varepsilon$.

\item[(ii)] Let $G_4$ is given by \eqref{G4coor} and $\delta:=\alpha_1S^*-\gamma_1I_2^*-\mu_1> 0$ (or equivalently $\gamma^*<K/(K-\sigma_2)$). Then there exists $\varepsilon(\delta)>0$ such that $\|G_4-G_8\|\geq\varepsilon(\delta)$.

\item[(iii)] Let $G_6$ is given by \eqref{subeqpt6} and $\delta:=\alpha_2S^*-\eta_2I_{12}-\gamma_2I_1^*-\mu_2\neq 0$. Then there exists $\varepsilon(\delta)>0$ such that $\|G_6-G_8\|\geq\varepsilon(\delta)$.

\item[(iv)] Let $G_7$ is given by \eqref{subeqpt8} and $\delta:=\alpha_1S^*-\eta_1I_{12}-\gamma_1I_1^*-\mu_1\neq 0$. Then there exists $\varepsilon(\delta)>0$ such that $\|G_7-G_8\|\geq\varepsilon(\delta)$.
\end{itemize}
\end{lemma}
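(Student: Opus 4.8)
The plan is to exploit the fact that, because all four coordinates of $G_8$ are nonzero, $G_8$ satisfies the equations of \eqref{Equilib} in their \emph{reduced} (divided-out) form; in particular it satisfies the two affine relations
\[
F_2(Y):=\alpha_1 S-\gamma_1 I_2-\eta_1 I_{12}-\mu_1=0,\qquad F_3(Y):=\alpha_2 S-\gamma_2 I_1-\eta_2 I_{12}-\mu_2=0,
\]
where $Y=(S,I_1,I_2,I_{12})$. Each $F_i$ is affine with gradient depending only on the fundamental parameters, hence globally Lipschitz with a parameter-only constant $|\nabla F_i|$. The elementary observation driving everything is: if $g$ is any affine function with $g(G_8)=0$ and Lipschitz constant $L_g$, then for every point $P$ one has $\|P-G_8\|\ge |g(P)|/L_g$. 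Thus, to separate a boundary equilibrium $G_j$ from $G_8$ it suffices to produce an affine combination of $F_2,F_3$ (which then automatically vanishes at $G_8$) whose value at $G_j$ is bounded below.

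For parts (ii)--(iv) this is immediate: the quantity $\delta$ in each case is, up to the typos in the coordinate labels, exactly $F_2(G_j)$ (for $G_4$ and $G_7$) or $F_3(G_j)$ (for $G_6$) --- that is, the residual by which the boundary point fails the single reduced equation it is not forced to satisfy (having the corresponding $I_i=0$). Taking $g=F_2$, resp. $g=F_3$, the displayed inequality gives $\|G_j-G_8\|\ge |\delta|/|\nabla F_j|$, so one may set $\varepsilon(\delta)=|\delta|/\sqrt{\alpha_1^2+\gamma_1^2+\eta_1^2}$ (resp. with index $2$), positive precisely under the stated hypothesis $\delta\ne0$.

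For part (i) the corresponding residuals are bounded below by parameter-only constants. First I would record the refined a priori bound $\sigma_2\le S<\sigma_3$ for $G_8$: the lower bound follows by dropping the nonnegative terms in $F_3=0$ (so $\alpha_2 S=\eta_2 I_{12}+\gamma_2 I_1+\mu_2\ge\mu_2$), the strict upper bound from the last equation of \eqref{Equilib} together with $\bar\gamma I_1 I_2>0$. The points $G_1,G_2,G_3$ all share the pattern $I_2=I_{12}=0$; feeding $S\ge\sigma_2$ into $F_2=0$ gives $\eta_1 I_{12}+\gamma_1 I_2=\alpha_1(S-\sigma_1)\ge\alpha_1(\sigma_2-\sigma_1)>0$, whence $\sqrt{I_{12}^2+I_2^2}\ge \alpha_1(\sigma_2-\sigma_1)/\sqrt{\eta_1^2+\gamma_1^2}$ and therefore $\|G_j-G_8\|\ge \alpha_1(\sigma_2-\sigma_1)/\sqrt{\eta_1^2+\gamma_1^2}$ for $j=1,2,3$, uniformly in $K$. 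For $G_5=(\sigma_3,0,0,I_{12})$ the coordinate $I_{12}$ is no longer zero, so I would instead use the combination $g:=\eta_2 F_2-\eta_1 F_3$, chosen so that the $I_{12}$-terms cancel; a short computation using \eqref{deltamu1} and \eqref{etaAetaA} yields the clean identity $g(G_5)=\Delta_\mu-\sigma_3\Delta_\alpha$, so that $\|G_5-G_8\|\ge |\Delta_\mu-\sigma_3\Delta_\alpha|/|\nabla g|$ with $|\nabla g|=\sqrt{\Delta_\alpha^2+\eta_1^2\gamma_2^2+\eta_2^2\gamma_1^2}$.

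The main obstacle is the $G_5$ case, the only one where the residual can genuinely vanish, namely exactly when $\Delta_\mu=\sigma_3\Delta_\alpha$, equivalently $\eta_1^*=\eta_2^*$ by \eqref{etaAetaA}. Indeed $G_8\to G_5$ would force both $F_2$ and $F_3$ to vanish at $G_5$, which pins $I_{12}$ to the two values $\alpha_1(\sigma_3-\sigma_1)/\eta_1$ and $\alpha_2(\sigma_3-\sigma_2)/\eta_2$ simultaneously, impossible unless $\eta_1^*=\eta_2^*$. Hence part (i) for $G_5$ rests on the nondegeneracy $\eta_1^*\ne\eta_2^*$, which holds in the regimes of Theorem~\ref{theo1} and follows from \eqref{deltamu} away from the single exceptional threshold $\sigma_3=\Delta_\mu/\Delta_\alpha$; I would either invoke this standing assumption or state it explicitly. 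Everything else reduces to bookkeeping of the (parameter-only) Lipschitz constants, which is routine.
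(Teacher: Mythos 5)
Your proposal is correct and rests on the same mechanism as the paper's proof: $G_8$ has all coordinates positive, hence satisfies the divided-out bracket equations $F_2=F_3=0$, while each boundary point leaves a nonzero residual in at least one bracket; since the brackets are affine with parameter-only gradients, the residual bounds the distance from below. The paper compresses this into an $O(\epsilon)$ continuity argument, writes it out only for $j=5$, and declares the other cases analogous; you make the Lipschitz constants explicit, which is a strictly cleaner version of the same idea, and your identification of $\delta$ with $F_2(G_j)$ or $F_3(G_j)$ in (ii)--(iv) matches the paper's intent exactly. Where you genuinely depart from (and improve on) the paper is the $G_5$ case: the paper's displayed brackets $\alpha_iK-\mu_i$ are those of $G_2$, not of $G_5=(\sigma_3,0,0,I_{12}^*)$ (the $-\eta_iI_{12}^*$ terms are missing), so its appeal to \eqref{assum} does not actually settle $j=5$. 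Your combination $g=\eta_2F_2-\eta_1F_3$, which kills the $I_{12}$-dependence and produces the $K$-uniform residual $\Delta_\mu-\sigma_3\Delta_\alpha$, is the right fix, and it correctly exposes that the unconditional claim in (i) for $G_5$ needs the nondegeneracy $\eta_1^*\ne\eta_2^*$ (equivalently $\Delta_\mu\ne\sigma_3\Delta_\alpha$ by \eqref{etaAetaA}); this hypothesis is absent from the lemma as stated, though it does hold in the regimes treated in Theorem~\ref{theo1}. Flagging it explicitly, as you do, is the correct call rather than a defect of your argument.
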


\begin{proof}

(i) We prove the assertion for $j=5$ since other cases are considered in a similar way.  The second and the third equations in \eqref{Equilib} near the point $G_5$ have the form
\begin{equation}\label{K1}
(\alpha_1K-\mu_1+O(\epsilon))I_1=0,\;\;(\alpha_2K-\mu_2+O(\epsilon))I_2=0,
\end{equation}
where $\epsilon=\|G_5-G_8\|$. By the assumption \eqref{assum}, one of numbers $\alpha_1K-\mu_1$, $\alpha_2K-\mu_2$ does not vanish and so the corresponding coefficient in (\ref{K1}) does not vanish also for small $\epsilon$, which implies (i) for $G_5$. Proofs of (ii)--(iv) use the same argument.

\end{proof}

\subsection{Equilibrium branches}
It turns out that the most natural way to study equilibrium points is to consider their dependence on the  carrying capacity $K $ (or, equivalently, on the modified  carrying capacity $K$). We know by Proposition~\ref{pro:G2} that the disease-free equilibrium point $G_2$ is the only stable equilibrium point for $0\le K< \sigma_1$. In this section we consider each equilibrium state separately and study their local stability for $K\ge \sigma_1$. We study first the local stability of each point individually and in the next sections consider the dependence on $K$.

Our main goal is to describe all possible continuous scenarios of how the locally stable equilibrium states of \eqref{MAIN} depends on $K$ provided that all other fundamental parameters $\alpha_i$, $\mu_i$, $b$, $\gamma_i$ remain fixed. To this end, we introduce the following concept.

\begin{definition}
By an \textit{equilibrium branch} we understand any continuous in $K\ge0$ family of equilibrium points of \eqref{MAIN} which are locally stable for all but finitely many threshold values of $K$.
\end{definition}

\begin{remark}
We need to distinguish the threshold values of $K$ in the above definition because, formally, the local stability (i.e. that the real parts of all the system characteristic roots are negative) fails when an equilibrium point change its type. On the other hand, a branch may be stable in the Lyapunoff sense even for the threshold values of $K$. Indeed, the latter holds at least for $\gamma=0$, see \cite{SKTW18a}.
\end{remark}

\section{The equilibrium state $G_3$: Proof of \ref{it1}}
%\subsection{The equilibrium state $G_3$: Proof of \ref{it1}}
Note that the next three boundary equilibriums $G_3,G_4$ and $G_5$ have  \textit{a priori}  non-zero coordinates; furthermore, the $S$-coordinate is a constant (independent on $K$).
The first of these is the equilibrium point $G_3$ with the presence of only the first strain. Its explicit expression with the nonnegativity condition are given by \eqref{subeqpt3}. Remark that when $K= \sigma_1$, the globally stable point $G_2$ bifurcates into $G_3$:
$$
\text{ $G_3 = G_2$ \quad when \quad $I_1^* = 0$ $\Leftrightarrow$ $K= \sigma_1$}
$$
Using \eqref{subeqpt3}, we find the corresponding Jacobian matrix evaluated at $G_3=(\sigma_1, I_1^*, 0,0)$:
%where
%\begin{equation}\label{eqpoint3}
%\begin{split}
%S^* &=\frac{\mu_1}{\alpha_1},\\
%Y_1^* &=\frac{b}{\Omega \alpha_1}(K-\frac{\mu_1}{\alpha_1}),
%\end{split}
%\end{equation}
\[
J_3=\begin{bmatrix}
%A & *  \\
%0& B
%\end{bmatrix}
%=
%\begin{bmatrix}
-\frac{r\sigma_1}{K} & -\alpha_1\sigma_1 & -\alpha_2\sigma_1 & -\alpha_3\sigma_1  \\
\alpha_1I_1^* & 0 & -\gamma_1I_1^* & -\eta_1I_1^* \\
0 & 0 & -\alpha_2(\sigma_2-\sigma_1)-\gamma_2I_1^* & 0 \\
0 & 0 & \overline{\gamma}I_1^* & -\alpha_3(\sigma_3-\sigma_1)+\eta_1I_1^*
\end{bmatrix},
\]
Notice that, $J_3$ has a block structure. The left upper $2\times 2$-block is obviously stable. Therefore $J_3$ is stable if and only if the right lower block is so. By virtue of $-\alpha_2(\sigma_2-\sigma_1)-\gamma_2I_1^*<0$ this is equivalent to
\begin{equation}\label{detG3}
-\alpha_3(\sigma_3-\sigma_1)+\eta_1I_1^* <0,
\end{equation}
or, equivalently, using the expression $I_1^* =\frac{r}{K \alpha_1}(K-\sigma_1)$ and \eqref{A1} we obtain
\begin{equation}\label{Ethreshold1}
\eta_1^*< \frac{K}{K-\sigma_1}.
\end{equation}
After some obvious manipulations we arrive at

\begin{proposition}\label{pro:G3}
The equilibrium point $G_3$ is stable nonnegative if and only if either \begin{equation}\label{G3ineq}
\left\{
\begin{array}{cc}
K>\sigma_1&\quad\text{if $\eta_1^*\le 1$}\\
&\\
\sigma_1<K<K_1& \quad\text{if $\eta^*_1>1$}.
\end{array}
\right.
\end{equation}
\end{proposition}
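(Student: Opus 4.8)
The plan is to read the stability of $G_3$ directly off the block structure of the Jacobian $J_3$ displayed above, thereby collapsing the four-dimensional spectral question into a single scalar inequality, and then to analyze that inequality as a function of $K$. First I would note that $J_3$ is block upper-triangular with respect to the coordinate splitting $(S,I_1)\mid(I_2,I_{12})$: the lower-left $2\times 2$ block vanishes, so the spectrum of $J_3$ is the union of the spectra of the two diagonal $2\times 2$ blocks. For the upper-left block
\[
\begin{bmatrix} -\tfrac{r\sigma_1}{K} & -\alpha_1\sigma_1 \\ \alpha_1 I_1^* & 0 \end{bmatrix},
\]
the trace equals $-\tfrac{r\sigma_1}{K}<0$ and the determinant equals $\alpha_1^2\sigma_1 I_1^*>0$ (recall that $I_1^*>0$ precisely when $K>\sigma_1$), so by the trace--determinant (Routh--Hurwitz) criterion both of its eigenvalues have negative real part. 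Hence this block is automatically stable once $K>\sigma_1$.

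Next I would treat the lower-right block, which is itself lower-triangular, so its eigenvalues are the diagonal entries $-\alpha_2(\sigma_2-\sigma_1)-\gamma_2 I_1^*$ and $-\alpha_3(\sigma_3-\sigma_1)+\eta_1 I_1^*$. The first is negative automatically: the ordering assumption \eqref{assum} gives $\sigma_1<\sigma_2$, and $\gamma_2,I_1^*\ge 0$. Consequently the entire stability problem reduces to the single inequality \eqref{detG3}, i.e. $-\alpha_3(\sigma_3-\sigma_1)+\eta_1 I_1^*<0$.

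It then remains to translate \eqref{detG3} into the clean normalized form. Substituting $I_1^* = \tfrac{r}{K\alpha_1}(K-\sigma_1)$ and rewriting $\alpha_3(\sigma_3-\sigma_1)=\tfrac{r}{\alpha_1}A_1$ via the definition \eqref{A1} of $A_1$, then dividing through by $\tfrac{r}{\alpha_1}A_1>0$ and recalling $\eta_1^*=\eta_1/A_1$, I obtain exactly the threshold \eqref{Ethreshold1}, namely $\eta_1^*<\tfrac{K}{K-\sigma_1}$.

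Finally I would analyze $\varphi(K):=\tfrac{K}{K-\sigma_1}=1+\tfrac{\sigma_1}{K-\sigma_1}$ for $K>\sigma_1$: it is strictly decreasing and its range is $(1,\infty)$. If $\eta_1^*\le 1$, then $\eta_1^*\le 1<\varphi(K)$ holds for every $K>\sigma_1$, so $G_3$ is stable throughout $K>\sigma_1$. If $\eta_1^*>1$, then $\eta_1^*<\varphi(K)$ is equivalent to $(\eta_1^*-1)K<\eta_1^*\sigma_1$, i.e. to $K<K_1=\tfrac{\sigma_1\eta_1^*}{\eta_1^*-1}$, which together with the existence requirement $K>\sigma_1$ gives the window $\sigma_1<K<K_1$. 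This is precisely \eqref{G3ineq}. The argument is elementary throughout; there is no genuine obstacle, and the only points needing care are the bookkeeping that turns \eqref{detG3} into \eqref{Ethreshold1} through the definitions in \eqref{A1}, and using the monotonicity of $\varphi$ to confirm that the two cases are mutually exclusive and exhaustive.
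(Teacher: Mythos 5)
Your proof is correct and follows essentially the same route as the paper: exploit the block upper-triangular structure of $J_3$, observe that the upper-left $2\times2$ block is automatically stable and that the first diagonal entry of the lower-right block is automatically negative, reduce everything to \eqref{detG3}, and rewrite it as \eqref{Ethreshold1}. Your final paragraph simply makes explicit the ``obvious manipulations'' (monotonicity of $K\mapsto K/(K-\sigma_1)$) that the paper leaves to the reader.
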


Notice that the point $G_3$ remains nonnegative and locally stable for \textit{any}  $K>\sigma_1$ provided $\eta^*_1\le 1$. This provides us with the first (simplest) example of a branch. More precisely, we have

\begin{corollary}[Branch \ref{it1}]\label{cor:G3}
Let $\eta^*_1\le 1$. Then
\begin{enumerate}[label=(\alph*),itemsep=0.4ex,leftmargin=0.8cm]
  \item for $0< K<\sigma_1$ the point $G_2$ is locally (in fact, globally) stable;
  \item for $ K=\sigma_1$ the point $G_2$ coincides with $G_3$;
  \item for $K>\sigma_1$ the point $G_3$ is locally stable.
\end{enumerate}
We display this schematically as
$$
G_2\rightarrow G_3
$$
\end{corollary}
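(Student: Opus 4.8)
The plan is to assemble the branch directly from the two local-stability criteria already established, namely Proposition~\ref{pro:G2} for $G_2$ and Proposition~\ref{pro:G3} for $G_3$, together with the explicit coordinate formula~\eqref{subeqpt3}. The three listed assertions correspond exactly to the three ranges $K<\sigma_1$, $K=\sigma_1$, $K>\sigma_1$, so the only genuine work is to check that these glue into a single continuous family that is stable away from the threshold $K=\sigma_1$, thereby matching the definition of an equilibrium branch.

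First I would dispose of (a): by the equivalence of conditions (a) and (c) in Proposition~\ref{pro:G2}, the disease-free point $G_2$ is locally (indeed globally) stable precisely for $0<K<\sigma_1$, which is exactly the claimed range. Next, for (b) I would read off from~\eqref{subeqpt3} that the only nonzero coordinate $I_1=\frac{r}{\alpha_1}\bigl(1-\frac{\sigma_1}{K}\bigr)$ vanishes if and only if $K=\sigma_1$; at that value $G_3=(\sigma_1,0,0,0)=(K,0,0,0)=G_2$, so the two equilibria coincide and $K=\sigma_1$ is the unique bifurcation (threshold) value. Finally, for (c) I would invoke Proposition~\ref{pro:G3}: under the standing hypothesis $\eta_1^*\le 1$ the first alternative in~\eqref{G3ineq} applies, so $G_3$ is nonnegative and locally stable for \emph{every} $K>\sigma_1$, with no upper cutoff $K_1$ entering.

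To conclude that this is genuinely a branch in the sense of the definition, I would concatenate $G_2$ (for $0<K\le\sigma_1$) with $G_3$ (for $K\ge\sigma_1$) and observe that the two pieces agree at $K=\sigma_1$ by step (b), so the resulting family is continuous in $K$; local stability then holds for all $K\ne\sigma_1$ by steps (a) and (c), i.e. for all but the single threshold value. This verifies the schematic transition $G_2\rightarrow G_3$.

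Since every ingredient is already proved, I do not expect a real obstacle here; the one point demanding a little care is the bookkeeping at $K=\sigma_1$ — confirming that the exchange of stability is clean (one equilibrium losing stability exactly as the other becomes admissible and nonnegative) rather than both being stable or both unstable on an interval. This is guaranteed because the stability threshold $K=\sigma_1$ in Proposition~\ref{pro:G2} coincides with the admissibility threshold $I_1\ge 0$ for $G_3$ and with the stability threshold~\eqref{Ethreshold1}, which reduces to $K>\sigma_1$ precisely when $\eta_1^*\le 1$.
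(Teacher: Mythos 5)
Your proposal is correct and follows essentially the same route as the paper, which likewise obtains the branch by combining Proposition~\ref{pro:G2} (stability of $G_2$ iff $0<K<\sigma_1$) with Proposition~\ref{pro:G3} (stability of $G_3$ for all $K>\sigma_1$ when $\eta_1^*\le 1$) and the observation that $G_3=G_2$ exactly when $I_1^*=0$, i.e.\ $K=\sigma_1$. Your extra remark on the clean exchange of stability at the threshold is a harmless elaboration of what the paper leaves implicit.
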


The latter corollary implies \ref{it1} in Theorem~\ref{theo1}.

\section{Proof of \ref{it4}}
Corollary~\ref{cor:G3} completely describes all possible scenarios for $0\le K<\infty$ when $\eta^*_1\le1$. In what follows, we shall always assume that $\eta^*_1>1$. Then Proposition~\ref{pro:G3} tells us that $G_3$ remains locally stable for any $\sigma_1<K<K_1$. If we want to find a \textit{continuous} equilibrium branch, we need to check which of the remained candidates $G_4,G_5,G_6,G_7,G_8$ becomes equal to $G_3$ for the right critical value $K=K_1$.

An easy inspection shows that for a generic choice of the fundamental parameters there is only one possible candidate, namely $G_6$. Thus, to construct the only possible scenario for a continuous equilibrium branch is when $G_3$ bifurcates into $G_6$. In the next section we give stability analysis of $G_4$ and $G_5$, and then continue with $G_6$ and construction of equilibrium branches.
%This also shows that the branch \eqref{branch236} is generically unique.

%Therefore, det $B$ > 0 implies that tr $B = B_{11} + B_{22} < 0$, i.e. �� is stable. This shows that the block $B$ is stable if and only if det $B> 0$ holds.
%where $I_1$ is given by \eqref{subeqpt3} and
%has one negative eigenvalue, i.e
%$$\lambda_3=
%-\alpha_2(\sigma_2-\sigma_1)-\gamma_2I_1.$$
%The other eigenvalue
%$$\lambda_4=-\alpha_3(\sigma_3-\sigma_1)+\eta_1I_1 $$
%is negative if
%$\alpha_3(\frac{\mu_1} {\alpha_1}-\frac{\mu_3} {\alpha_3})$
%$\eta_1 <\hat{\eta_1} %\frac{\Omega \alpha_1\alpha_3}{b(K-\sigma_1)}(\sigma_3-\sigma_1),
%$
%and then the remaining $2\times2$ block has negative trace and positive %determinant as follows
%$$\tau=-\frac{b}{\Omega }\sigma_1$$
%$$\Delta=\alpha_1^2\sigma_1I_1.$$
 %Therefore the $G_3$ is stable.

\subsection{The equilibrium state $G_4$}
The equilibrium point $G_4$ expresses the presence of only the second strain, see \eqref{G4coor}.  It
is nonnegative if and only if
\begin{equation}\label{subeqpt4}
K>\sigma_2 .
\end{equation}
%where
%\begin{equation}\label{eqpoint4}
%\begin{split}
%S^* &=\frac{\mu_2}{\alpha_2},\\
%Y_2^* &=\frac{b}{\Omega \alpha_2}(K-\frac{\mu_2}{\alpha_2}),
%\end{split}
%\end{equation}
% and $G_4$ is nonnegative if and only if
 %$$I_2^*=\frac{b}{\Omega \alpha_2}(K-\sigma_2)>0$$
 Note that if $G_4$ is nonnegative then  by virtue of \eqref{subeqpt4} and \eqref{assum}, $G_3$ is nonnegative too.
The Jacobian matrix evaluated at $G_4$ is
\begin{equation}\label{J.G4}
J_4=
\begin{bmatrix}
-r\frac{\sigma_2}{K} & -\alpha_1\sigma_2 & -\alpha_2\sigma_2 & -\alpha_3\sigma_2  \\
0 & \alpha_1(\sigma_2-\sigma_1)-\gamma_1I_2^* & 0 & 0 \\
\alpha_2I_2^* & -\gamma_2I_2^* & 0 & -\eta_2I_2^* \\
0 & \overline{\gamma}I_2^* & 0 & -\alpha_3(\sigma_3-\sigma_2)+\eta_2I_2^*
\end{bmatrix}
\end{equation}
Note that, interchanging rows and columns of the matrix \eqref{J.G4} only change the sign of the determinant of this matrix. Therefore, after an obvious rearrangement, the eigenvalues of $J_4$ solves the following equation:
\begin{equation}\label{J.G4 diagonalilized}
\begin{vmatrix}
-r\frac{\sigma_2}{K}-\lambda &-\alpha_2\sigma_2 & -\alpha_1\sigma_2 & -\alpha_3\sigma_2  \\
\alpha_2I_2^* & -\lambda& -\gamma_2I_2^* & -\eta_2I_2^* \\
0 & 0 & \alpha_1(\sigma_2-\sigma_1)-\gamma_1I_2^* -\lambda& 0 \\
0 & 0& \overline{\gamma}I_2^* & -\alpha_3(\sigma_3-\sigma_2)+\eta_2I_2^*-\lambda
\end{vmatrix}=0.
\end{equation}
Again, one easily verifies that the left upper $2\times2$-block is stable, while the stability of the right down (lower-diagonal) block is equivalent to the negativity of the diagonal elements, i.e. to  the inequalities
$$
\left\{
\begin{array}{rl}
\alpha_1(\sigma_2-\sigma_1)-\gamma_1I_2^* &<0,\\
 -\alpha_3(\sigma_3-\sigma_2)+\eta_2I_2^*&<0.
\end{array}
\right.
$$
 Thus the stability of $G_4$ is equivalent to the inequalities
 \begin{equation}\label{G4ineq}
\left\{
\begin{array}{rl}
 K(1-\frac{1}{\gamma^*})>\sigma_2\\
 %\sigma_2<K<\frac{\sigma_1}{1-\frac{A_2}{\eta_2}}
 K<K_3,
\end{array}
\right.
 \end{equation}
 where $\gamma^*:=\frac{\gamma_1}{A_3}$.
In summary, we have
\begin{proposition}
\label{pro:G4}
The equilibrium point $G_4$ is stable and nonnegative iff
\begin{itemize}
\item[$\bullet$]
$K_3 <K< \frac{\sigma_2\gamma^*}{\gamma^*-1}$ when $\gamma^*>1$ and $\eta_2^*>1$, or
\item[$\bullet$]
$K>\frac{\sigma_2\gamma^*}{\gamma^*-1}$ when $\gamma^*>1$ and $\eta_2^*<1$.
           \end{itemize}
 \end{proposition}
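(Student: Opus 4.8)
The plan is to extract the stability criterion directly from the block-triangular form \eqref{J.G4 diagonalilized} that has already been produced, and then to convert the two resulting scalar inequalities into explicit ranges for $K$ which I intersect with the nonnegativity constraint. There are thus two independent ingredients to assemble. Nonnegativity of $G_4$ is, by \eqref{G4coor} and \eqref{subeqpt4}, simply $K>\sigma_2$. For local stability I use that the upper-left $2\times2$ block of \eqref{J.G4 diagonalilized} is already known to be stable (its trace $-r\sigma_2/K$ is negative and its determinant $\alpha_2^2\sigma_2 I_2^*$ is positive, since $I_2^*>0$), while the lower-right block is lower-triangular; hence stability of $G_4$ is equivalent to the negativity of its two diagonal entries
\[
\alpha_1(\sigma_2-\sigma_1)-\gamma_1 I_2^*<0,\qquad -\alpha_3(\sigma_3-\sigma_2)+\eta_2 I_2^*<0.
\]

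Next I would substitute the explicit coordinate $I_2^*=\frac{r}{\alpha_2}\bigl(1-\frac{\sigma_2}{K}\bigr)$ from \eqref{G4coor}, clear the common factor $\alpha_2/r$, and normalize by $A_3$ and $A_2$ using the rates $\gamma^*=\gamma_1/A_3$ and $\eta_2^*=\eta_2/A_2$ from \eqref{A2} and \eqref{A3}. This reduces the two inequalities to the clean forms $\gamma^*\bigl(1-\frac{\sigma_2}{K}\bigr)>1$ and $\eta_2^*\bigl(1-\frac{\sigma_2}{K}\bigr)<1$, recovering \eqref{G4ineq}. The crucial observation for the first one is that, because $0<1-\frac{\sigma_2}{K}<1$ for $K>\sigma_2$, it can hold only when $\gamma^*>1$; under this necessary condition it is equivalent to the lower bound $K>\frac{\sigma_2\gamma^*}{\gamma^*-1}$. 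For the second inequality I split on the sign of $\eta_2^*-1$: when $\eta_2^*>1$ clearing the denominator yields the upper bound $K<K_3=\frac{\sigma_2\eta_2^*}{\eta_2^*-1}$, whereas when $\eta_2^*<1$ the left-hand side is automatically below $1$ and no upper constraint is imposed.

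Finally I would intersect the ranges. Since $\gamma^*>1$ forces $\frac{\sigma_2\gamma^*}{\gamma^*-1}>\sigma_2$, the stability lower bound already implies $K>\sigma_2$, so the nonnegativity constraint is absorbed and need not be carried separately. Combining then gives exactly the two advertised alternatives: when $\eta_2^*>1$ both bounds are active and $G_4$ is stable and nonnegative precisely on the window delimited by the thresholds $\frac{\sigma_2\gamma^*}{\gamma^*-1}$ and $K_3$, while when $\eta_2^*<1$ only the lower bound survives and stability holds on the half-line $K>\frac{\sigma_2\gamma^*}{\gamma^*-1}$.

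The step I expect to require the most care is the inequality manipulation itself rather than any spectral computation: both $1-\frac{1}{\gamma^*}$ and $\eta_2^*-1$ change sign, and it is exactly these sign changes that decide whether one may clear a denominator without reversing the inequality, that force the hypothesis $\gamma^*>1$, and that generate the case split on $\eta_2^*$. In the bounded case I would additionally verify the relative order of the two thresholds, so as to be certain which is the lower and which the upper endpoint and that the window between them is non-empty; otherwise $G_4$ would never be simultaneously stable and nonnegative, and it is this ordering check where I would be most careful.
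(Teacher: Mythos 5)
Your argument is correct and is essentially the paper's own: the same block-triangularization of \eqref{J.G4 diagonalilized}, the same two diagonal inequalities, and the same normalization by $A_3$ and $A_2$ leading to \eqref{G4ineq}. One remark on the loose end you flagged: the ordering of the two thresholds is not actually open once the derivation is done, because the $\gamma^*$-inequality $\gamma^*\bigl(1-\tfrac{\sigma_2}{K}\bigr)>1$ involves a quantity increasing in $K$ and therefore always produces the \emph{lower} bound $K>\frac{\sigma_2\gamma^*}{\gamma^*-1}$, while the $\eta_2^*$-inequality (when $\eta_2^*>1$) always produces the \emph{upper} bound $K<K_3$; so the window is $\frac{\sigma_2\gamma^*}{\gamma^*-1}<K<K_3$, consistent with \eqref{G4ineq}, and the first bullet of the proposition as printed has its endpoints transposed. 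The only genuinely case-dependent fact is non-emptiness: since $x\mapsto\frac{\sigma_2 x}{x-1}$ is decreasing for $x>1$, the window is non-empty precisely when $\gamma^*>\eta_2^*$, which you should state rather than leave as a check to be performed.
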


 \begin{remark}
 In this paper, we are primarily interested in the case of `small' values of $\gamma_i$. On the other hand, the latter proposition shows that $G_4$ may be stable only if $\gamma_1>A_3$, therefore this equilibrium is not stable for small values of $\gamma_1$ and will be eliminated from the subsequent analysis.
 \end{remark}

 \begin{corollary}
The equilibrium point $G_4$ is locally unstable if $0\le \gamma_1^*<1$.
 \end{corollary}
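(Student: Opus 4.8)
The plan is to read off the instability directly from the eigenvalue structure of the Jacobian $J_4$ already exhibited in the analysis preceding Proposition~\ref{pro:G4}, rather than merely invoking the contrapositive of that proposition. Recall that after the row/column rearrangement of \eqref{J.G4}, the matrix $J_4$ becomes block-triangular with a stable upper $2\times2$ block and a lower-diagonal block whose diagonal entries are themselves eigenvalues of $J_4$. One of these diagonal entries is
$$\lambda_* := \alpha_1(\sigma_2-\sigma_1)-\gamma_1 I_2^*,$$
so the sign of $\lambda_*$ alone controls stability of $G_4$ in the corresponding direction, and it suffices to show $\lambda_*>0$.

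First I would substitute $I_2^* = \frac{r}{\alpha_2}\bigl(1-\frac{\sigma_2}{K}\bigr)$ from \eqref{G4coor} and use the identity $\alpha_1(\sigma_2-\sigma_1) = \frac{r}{\alpha_2}A_3$ coming from the definition \eqref{A2} of $A_3$. This rewrites the eigenvalue as
$$\lambda_* = \frac{r}{\alpha_2}\Bigl(A_3 - \gamma_1\bigl(1-\tfrac{\sigma_2}{K}\bigr)\Bigr) = \frac{rA_3}{\alpha_2}\Bigl(1 - \gamma^*\bigl(1-\tfrac{\sigma_2}{K}\bigr)\Bigr),$$
where $\gamma^* = \gamma_1/A_3$ is the quantity denoted $\gamma_1^*$ in the statement. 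Since $A_3>0$ by \eqref{assum}, the prefactor $\frac{rA_3}{\alpha_2}$ is positive and the sign of $\lambda_*$ is that of $1-\gamma^*\bigl(1-\frac{\sigma_2}{K}\bigr)$.

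Next I would observe that $G_4$ is a nonnegative equilibrium only when $K>\sigma_2$ by \eqref{subeqpt4}, so on the relevant range one has $0<1-\frac{\sigma_2}{K}<1$. Consequently, whenever $0\le\gamma^*<1$ we obtain $\gamma^*\bigl(1-\frac{\sigma_2}{K}\bigr)<\gamma^*<1$, which forces $\lambda_*>0$ strictly. The boundary value $\gamma^*=0$, i.e. $\gamma_1=0$, is covered as well, since then $\lambda_*=\alpha_1(\sigma_2-\sigma_1)>0$ directly from the ordering \eqref{assum}. Thus $J_4$ possesses a strictly positive eigenvalue for every admissible $K$, and $G_4$ is locally unstable, as claimed.

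I expect no genuine obstacle here: the whole content is the sign computation of $\lambda_*$, driven by $A_3>0$ and by the factor $1-\frac{\sigma_2}{K}$ lying in $(0,1)$. The one point worth stating explicitly is that arguing through $\lambda_*>0$ yields \emph{true} instability, which is slightly stronger than the mere failure of stability one would get by negating Proposition~\ref{pro:G4}; the strict inequality $\gamma^*<1$ is exactly what guarantees this strictness.
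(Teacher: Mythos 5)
Your proposal is correct and follows essentially the same route as the paper: the corollary is just the observation that both stability regimes in Proposition~\ref{pro:G4} require $\gamma^*>1$, and that proposition is itself derived from the sign of the very diagonal entry $\alpha_1(\sigma_2-\sigma_1)-\gamma_1I_2^*$ that you analyze (your condition $1-\gamma^*(1-\sigma_2/K)>0$ is precisely the negation of the first inequality in \eqref{G4ineq}). Your explicit remark that the strict inequality $\gamma^*<1$ yields a strictly positive eigenvalue, hence genuine linear instability rather than mere failure of the stability criterion, is a worthwhile small sharpening but not a different argument.
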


\subsection{The equilibrium state $G_5$}
An equilibrium point in the presence of coinfection is given by \eqref{subeqpt5}.

\begin{proposition}
\label{pro:G5}
The equilibrium point $G_5$ is stable and nonnegative iff
\begin{equation}\label{G5e}
%\max\{\frac{1}{\eta_1^*}, \frac{A_2}{\eta_2}\}<1, \quad
\eta:=\min\{\eta_1^*, \eta_2^*\}>1 \quad \text{and}\quad K>\frac{\sigma_3\eta}{\eta-1}.
 \end{equation}
 Furthermore,  if the point $G_5$ is nonnegative and locally stable for a certain $K_0>0$ then it will be so for any $K\ge K_0$ (provided that other parameters are fixed).
\end{proposition}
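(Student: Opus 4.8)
The plan is to reuse the Jacobian-and-block-structure scheme that already settled $G_3$ and $G_4$. First I would write down the Jacobian $J_5$ of \eqref{MAIN} at $G_5=(\sigma_3,0,0,I_{12})$ given by \eqref{subeqpt5}. Since $I_1=I_2=0$ at $G_5$, the $I_1$- and $I_2$-rows collapse to purely diagonal rows (all off-diagonal entries in those rows carry a factor $I_1$ or $I_2$), while the identity $\alpha_3 I_{12}=r(1-\tfrac{\sigma_3}{K})$ coming from the explicit value of $I_{12}$ simplifies the $(S,S)$ entry to $-\tfrac{r\sigma_3}{K}$, and $\alpha_3\sigma_3=\mu_3$ makes the $(I_{12},I_{12})$ entry vanish. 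The result is that $J_5$ carries two eigenvalues $\lambda_2=\alpha_1\sigma_3-\eta_1 I_{12}-\mu_1$ and $\lambda_3=\alpha_2\sigma_3-\eta_2 I_{12}-\mu_2$ from the $I_1$- and $I_2$-directions, together with a $2\times2$ block acting on the $(S,I_{12})$-coordinates.

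Next I would expose the block-triangular structure by reordering the coordinates as $(S,I_{12},I_1,I_2)$. In this order the lower-left $2\times2$ block is zero, so the spectrum of $J_5$ splits as the union of the spectrum of the $(S,I_{12})$-block and $\{\lambda_2,\lambda_3\}$. The $(S,I_{12})$-block has trace $-\tfrac{r\sigma_3}{K}<0$ and determinant $\alpha_3^2\sigma_3 I_{12}>0$ whenever $I_{12}>0$, hence it is automatically stable on the nonnegativity region $K>\sigma_3$. Consequently the local stability of $G_5$ reduces to the two scalar inequalities $\lambda_2<0$ and $\lambda_3<0$.

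The third step is purely algebraic. Using $\mu_i=\alpha_i\sigma_i$ and the definitions \eqref{A1}, \eqref{A3}, together with the explicit $I_{12}$, each inequality factors as $\lambda_2=\tfrac{rA_1}{\alpha_3}\bigl(1-\eta_1^*(1-\tfrac{\sigma_3}{K})\bigr)$ and $\lambda_3=\tfrac{rA_2}{\alpha_3}\bigl(1-\eta_2^*(1-\tfrac{\sigma_3}{K})\bigr)$. Thus $\lambda_i<0$ is equivalent to $\eta_i^*(1-\tfrac{\sigma_3}{K})>1$, which forces $\eta_i^*>1$ and then $K>\tfrac{\sigma_3\eta_i^*}{\eta_i^*-1}$. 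Since $x\mapsto\tfrac{\sigma_3 x}{x-1}$ is strictly decreasing on $(1,\infty)$, the larger (binding) of the two thresholds corresponds to the smaller of $\eta_1^*,\eta_2^*$, so imposing both inequalities at once is exactly $\eta=\min\{\eta_1^*,\eta_2^*\}>1$ and $K>\tfrac{\sigma_3\eta}{\eta-1}$, which is the stated criterion \eqref{G5e}.

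For the final monotonicity assertion I would simply observe that $I_{12}=\tfrac{r}{\alpha_3}(1-\tfrac{\sigma_3}{K})$ is increasing in $K$, so $\lambda_2$ and $\lambda_3$ are decreasing in $K$; combined with the $(S,I_{12})$-block being stable for every $K>\sigma_3$, this shows that the set of $K$ for which $G_5$ is stable is an upper half-line, whence stability at some $K_0$ propagates to all $K\ge K_0$. The calculation is essentially routine, and I expect the only genuinely load-bearing observations to be (i) that the $I_1,I_2$ equations decouple at $G_5$ so that the reordered Jacobian is block-triangular, and (ii) the monotonicity of $x/(x-1)$ that lets one replace the two thresholds by a single one governed by $\eta=\min\{\eta_1^*,\eta_2^*\}$.
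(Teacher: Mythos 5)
Your proposal is correct and follows essentially the same route as the paper: both compute $J_5$, observe that the $(S,I_{12})$-block is automatically stable while the $I_1$- and $I_2$-directions contribute the eigenvalues $\alpha_i(\sigma_3-\sigma_i)-\eta_iI_{12}^*$, and then translate their negativity into $\eta=\min\{\eta_1^*,\eta_2^*\}>1$ and $K>\sigma_3\eta/(\eta-1)$, with the final monotonicity claim following from the fact that $I_{12}^*$ increases in $K$. Your explicit reordering of coordinates and the trace/determinant check merely spell out what the paper calls the ``obviously stable'' block.
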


\begin{proof}
By \eqref{subeqpt5}, $I_{12}^*=\frac{r}{K \alpha_3}(K-\sigma_3)$, hence the positivity of $I_{12}^*$ is equivalent to
$$
K> \sigma_3.
$$
Next, the Jacobian matrix evaluated at $G_5$ is
\begin{equation}\label{J.G5}
J_5=
\begin{bmatrix}
-r\frac{\sigma_3}{K} & -\alpha_1\sigma_3 & -\alpha_2\sigma_3 & -\alpha_3\sigma_3  \\
0 & \alpha_1(\sigma_3-\sigma_1)-\eta_1I_{12}^* & 0 & 0 \\
0 & 0 & \alpha_2(\sigma_3-\sigma_2)-\eta_2I_{12}^* & 0 \\
\alpha_3I_{12}^* & \eta_1I_{12}^* & \eta_2I_{12}^* & 0
\end{bmatrix},
\end{equation}
The matrix has a block structure where the block
$$
\begin{bmatrix}
-r\frac{\sigma_3}{K} & -\alpha_3\sigma_3  \\
\alpha_3I_{12}^* & 0
\end{bmatrix}
$$
is obviously stable, therefore the stability of $J_5$ is equivalent to the negativity of two diagonal elements:
\begin{align*}
\alpha_1(\sigma_3-\sigma_1)-\eta_1I_{12}^*<0,\\
\alpha_2(\sigma_3-\sigma_2)-\eta_2I_{12}^*<0.
\end{align*}
First notice that stability of $G_5$ implies immediately that $I^*_{12}>0$. Also, taking into account that $I_{12}^*=\frac{r}{K \alpha_3}(K-\sigma_3)$,  the stability of $G_5$ is equivalent to the inequalities
 %\begin{equation}\label{G5ineq}
%\begin{split}
%K>\frac{\sigma_1}{1-\frac{1}{\eta_1^*}}\\
%K>\frac{\sigma_1}{1-\frac{A_2}{\eta_2}}
%\end{split}
%\end{equation}
$$
\sigma_3<  K\left(1-\min\left\{\frac{1}{\eta^*_1},\,    \frac{1}{\eta^*_2}\right\}\right)=K(1-\frac{1}{\eta}).
$$
In summary, we have \eqref{subeqpt5}. Finally, the last statement of the proposition follows immediately from the increasing (with respect to $K$) character  of the second inequality in \eqref{G5e}.
\end{proof}

%\begin{remark}
%Note that it follows a priori from \eqref{G5e} that $K>\sigma_3$ when $G_5$ is stable. Also, using \eqref{submodel2}, the boundary equilibrium point $G_{1+j}$ for $j=1,2,3$ has non-negative coordinates  when $K > \sigma_j$ and it coincides with $G_2$ if $K=\sigma_j$.
%\end{remark}

\begin{remark}
We emphasize that the stability of the equilibrium states $G_2,G_3,G_4$ and $G_5$ does not involve the interference constants $\gamma_1, \gamma_2$.
\end{remark}

\subsection{The equilibrium state $G_6$}\label{sec:g6}
Analysis of the remaining three equilibrium points $G_6,G_7$ and $G_8$ is more delicate and now also involves the coinfection constants $\gamma_1,\gamma_2.$
Let us  consider the  boundary equilibrium point  $$
G_6=(\frac{\sigma_1K}{K_1}, \,\,
\frac{\mu_3}{\eta_1}(1-\frac{K}{K_2}),\,\,0,\,\,
\frac{\mu_1}{\eta_1}(\frac{K}{K_1}-1)),
$$
see \eqref{subeqpt6}. First notice that the coordinates of $G_6$ are nonnegative if and only if the two conditions hold: $K_1>0$, what is equivalent to $\eta^*_1>1$, and also
$$
\sigma_1<S^*<\sigma_3.
$$
We see that $G_6$ is nonnegative if and only if
\begin{equation}\label{G6pos}
K_1<K<K_2, \qquad \eta^*_1>1.
\end{equation}
(Note that the bilateral inequality is inconsistent with \eqref{G5e}).

Now let us study the local stability of $G_6$. Using \eqref{subeqpt6}, the Jacobian matrix for $G_6$ is found as
$$
J_6=
\begin{bmatrix}
-r\frac{S^*}{K } & -\alpha_1S^* & -\alpha_2S ^*& -\alpha_3S^^*  \\
\alpha_1I_1^* & 0 & -\gamma_1I_1^* & -\eta_1I_1^* \\
0 & 0 & \alpha_2S^*-\eta_2I_{12}^*-\gamma_2I_1^*-\mu_2 & 0 \\
\alpha_3I_{12}^* & \eta_1I_{12}^* & \eta_2I_{12}^*+\overline{\gamma}I_1^* & 0
\end{bmatrix}.
$$
with $S^*,I^*_1,I^*_{12}$  given by \eqref{subeqpt6}. Using the block structure of $J_6$, it follows that $G_6$ is locally stable if and only if
\begin{itemize}
\item[$\bullet$] there holds
\begin{equation}\label{stabcond6.1}
 \alpha_2S^*-\eta_2I_{12}^*-\gamma_2I_1^*-\mu_2 <0
\end{equation}
\item[$\bullet$] and the matrix below is stable:
\begin{equation}\label{JII}
\tilde J=
\begin{bmatrix}
-r\frac{S^*}{K} & -\alpha_1S ^*& -\alpha_3S ^* \\
\alpha_1I_1 ^*& 0 & -\eta_1I_1^* \\
\alpha_3I_{12}^* & \eta_1I_{12}^* & 0
\end{bmatrix}
=
\begin{bmatrix}
S^* & 0 & 0 \\
0 & I_1^* & 0 \\
0 & 0 & I_{12}^*
\end{bmatrix}
\begin{bmatrix}
-\frac{r}{K} & -\alpha_1 & -\alpha_3  \\
\alpha_1 & 0 & -\eta_1 \\
\alpha_3 & \eta_1 & 0
\end{bmatrix}.
\end{equation}
\end{itemize}

The stability of $\tilde J$ is equivalent to the stability of the last matrix factor in \eqref{JII}. An easy application of the Routh-Hurwitz criteria \cite{Gantmacher} confirms that $\tilde J$ is always stable. Hence, the stability of $G_6$ is equivalent to  the condition \eqref{stabcond6.1}. Using \eqref{subeqpt6}, we can rewrite it as follows:
\begin{equation}\label{stabcond6.2}
    S^*(\Delta_\alpha+\gamma_2\alpha_3)<\Delta_\mu+\gamma_2\mu_3
\end{equation}
see \eqref{deltamu}. Let us define
\begin{equation}\label{S1}
\hat{S}_1:=\frac{\Delta_\mu+\mu_3\gamma_2}{\Delta_\alpha+\alpha_3\gamma_2}
\end{equation}
We have by using \eqref{A1}--\eqref{A3}
\begin{align}\label{Shat1}
\hat{S}_1-\sigma_1&=
\frac{\eta_1\alpha_1\alpha_2(\sigma_2-\sigma_2)+ \gamma_2\alpha_1\alpha_3(\sigma_3-\sigma_1)}{\alpha_1(\Delta_\alpha+\alpha_3\gamma_2)}\nonumber\\
&=\frac{r(\eta_1A_3+\gamma_2A_1)}{\alpha_1(\Delta_\alpha+\alpha_3\gamma_2)},\\
\label{Shat1sigma2}
\hat{S}_1-\sigma_2
&=\frac{\eta_2\alpha_1\alpha_2(\sigma_2-\sigma_1) +\gamma_2\alpha_2\alpha_3(\sigma_3-\sigma_2)}{\alpha_2 (\Delta_\alpha+\alpha_3\gamma_2)}\nonumber\\
&=\frac{r(\eta_2A_3+\gamma_2A_2)}{\alpha_1(\Delta_\alpha+\alpha_3\gamma_2)},\\
\label{Shat1sigma3}
\hat{S}_1-\sigma_3
&=\frac{\eta_2\alpha_1\alpha_3(\sigma_3-\sigma_1)-\eta_1\alpha_2\alpha_3 (\sigma_2-\sigma_3)}{\alpha_3(\Delta_\alpha+\alpha_3\gamma_2)}\nonumber\\
&=\frac{A_1A_2r(\eta_2^*- \eta_1^*)}{\alpha_3(\Delta_\alpha+\alpha_3\gamma_2)},
\end{align}

Consider first the case $\Delta_\alpha+\alpha_3\gamma_2=0$. Then by \eqref{deltamu21} it follows that $\Delta_\mu+\gamma_2\mu_3>0$ therefore \eqref{stabcond6.2} holds automatically true in this case, and $G_6$ is locally stable.

Next consider the case $\Delta_\alpha+\alpha_3\gamma_2<0$. Then it follows from \eqref{stabcond6.2} that $G_6$ is stable whenever $S^*>\hat{S}_1$. On the other hand, \eqref{Shat1} implies in this case $\hat{S}_1<\sigma_1,$ therefore using \eqref{zineq} we see that
\begin{equation}\label{Shatsigma2}
S^*>\sigma_1>\hat{S}_1
\end{equation}
whenever $S^*$ is nonnegative. Therefore in this case $G_6$ is locally stable whenever \eqref{G6pos} are fulfilled. Note also that under the made assumption $\Delta_\alpha+\alpha_3\gamma_2<0$ one necessarily has $\eta_2A_1> \eta_1 A_2$. Indeed, if $\eta_2A_1\le \eta_1 A_2$ then \eqref{etaA13} implies $\Delta_\alpha>0$, therefore $\Delta_\alpha+\alpha_3\gamma_2>0$, a contradiction.

Finally, assume that
\begin{equation}\label{Deltaalpha1}
\Delta_\alpha+\alpha_3\gamma_2>0
\end{equation}
Then  by \eqref{stabcond6.2} the point $G_6$ is locally stable if and only if $S^*<\hat{S}_1$, i.e.
\begin{equation}\label{S1hatstar}
K<\frac{\hat{S}_1}{1-\frac{1}{\eta_1^*}}.
\end{equation}
Under assumption \eqref{Deltaalpha1}, \eqref{Shat1} implies $\hat{S}_1>\sigma_1$. On the other hand, we have
$$
\hat{S}_1\ge\sigma_3
\text{ if }
\eta_2^*\ge\eta^*_1
\quad\text{ and }\quad
\hat{S}_1<\sigma_3
\text{ if }
\eta_2^*<\eta^*_1.
$$
On the other hand, in the latter case, the inequality $\eta_2^*\ge\eta^*_1$  by virtue of \eqref{etaA13} that in fact
$\Delta_\alpha>0$, therefore \eqref{Deltaalpha1} holds automatically true in this case. Combining \eqref{S1hatstar} with the nonnegativity condition \eqref{G6pos}, and summarizing the above observations we arrive at
%In summary, this implies

\begin{proposition}
\label{pro:G6}
The equilibrium point $G_6$ is nonnegative stable  iff $\eta_1>A_1$ and the following conditions hold:
\begin{equation}\label{conG6}
K_1<K<\frac{Q}{\sigma_1}K_1
\end{equation}
where
\begin{equation}\label{Qdef}
Q=\left\{
\begin{array}{ll}
\sigma_3&\quad \text{if  $\eta_2^*\ge\eta^*_1$};\\
\hat{S}_1&\quad \text{if $\eta_2^*<\eta^*_1$.}
\end{array}
\right.
\end{equation}
\end{proposition}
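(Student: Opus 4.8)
The plan is to reduce the local stability of $G_6$ to a single scalar inequality and then intersect the resulting admissible range of $K$ with the nonnegativity window \eqref{G6pos}. The block-triangular structure of the Jacobian $J_6$ already decouples the problem into two independent pieces: the $3\times3$ block $\tilde J$ of \eqref{JII}, which is unconditionally stable (independently of $K$) by the Routh--Hurwitz test recorded just before the statement, and the single diagonal entry responsible for condition \eqref{stabcond6.1}. Hence, under the existence precondition $\eta_1>A_1$ (equivalently $\eta_1^*>1$, as demanded in \eqref{G6pos}), stability is equivalent to \eqref{stabcond6.2}, i.e. $S^*(\Delta_\alpha+\gamma_2\alpha_3)<\Delta_\mu+\gamma_2\mu_3$.

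Since $S^*=\sigma_1K/K_1$ is an increasing linear function of $K$ and \eqref{stabcond6.2} is linear in $S^*$ with leading coefficient $\Delta_\alpha+\gamma_2\alpha_3$, this inequality becomes a one-sided bound on $K$ whose orientation is dictated by the sign of that coefficient. I would therefore split into the three cases $\Delta_\alpha+\gamma_2\alpha_3$ equal to, less than, or greater than zero. In the first case the right-hand side of \eqref{stabcond6.2} is positive by \eqref{deltamu21}, and in the second the flipped inequality $S^*>\hat S_1$ holds automatically because \eqref{Shat1} forces $\hat S_1<\sigma_1\le S^*$ via \eqref{zineq}; in both situations $G_6$ is stable throughout the entire window $K_1<K<K_2$. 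In the remaining case \eqref{Deltaalpha1} the inequality reads $S^*<\hat S_1$, i.e. the upper threshold \eqref{S1hatstar}, $K<\hat S_1K_1/\sigma_1$.

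The final, and only genuinely delicate, step is to package all three cases into the single formula \eqref{conG6}--\eqref{Qdef}. The binding upper bound on $K$ is $\min\{K_2,\ \hat S_1K_1/\sigma_1\}=\tfrac{K_1}{\sigma_1}\min\{\sigma_3,\hat S_1\}$, so everything hinges on comparing $\hat S_1$ with $\sigma_3$; by \eqref{Shat1sigma3} this comparison is governed entirely by the sign of $\eta_2^*-\eta_1^*$, giving $\hat S_1\ge\sigma_3\iff\eta_2^*\ge\eta_1^*$, which is precisely the dichotomy defining $Q$ in \eqref{Qdef}. The subtle consistency check is that in the two automatic cases $\Delta_\alpha+\gamma_2\alpha_3\le0$ the formula must still reproduce the bound $K_2$: here I would invoke \eqref{etaA13} to show that $\Delta_\alpha+\gamma_2\alpha_3\le0$ forces $\eta_1A_2<\eta_2A_1$, i.e. $\eta_2^*>\eta_1^*$, so that $Q=\sigma_3$ and $\tfrac{Q}{\sigma_1}K_1=K_2$ as required. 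Collecting the cases yields the stable-and-nonnegative region $K_1<K<\tfrac{Q}{\sigma_1}K_1$ together with the existence condition $\eta_1>A_1$, which is exactly the claim. I expect the bookkeeping of these sign deductions in the reconciliation step, rather than any individual computation, to be the main obstacle.
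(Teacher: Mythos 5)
Your proposal is correct and follows essentially the same route as the paper: the block-triangular reduction of $J_6$ to the unconditionally stable factor $\tilde J$ plus the scalar condition \eqref{stabcond6.2}, the three-way case split on the sign of $\Delta_\alpha+\alpha_3\gamma_2$, and the reconciliation of the upper bound via \eqref{Shat1sigma3} and \eqref{etaA13}. No gaps to report.
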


%\begin{remark}
%We notice that  the inequalities $\eta_2A_1<\eta_1 A_2$ imply by virtue of \eqref{etaA13} that in fact
%$$
%\Delta_\alpha>0.
%$$
%In other words, \eqref{conG6} with the nontrivial upper bound $\frac{\hat{S}_1}{1-\frac{1}{\eta_1^*}}$ holds only if $\Delta_\alpha>0$. This remark will be useful later on when we discuss bifurcations of $G_6$.
%\end{remark}

Now we are ready to describe the equilibrium branch for $\eta_1^*>1$.

\begin{corollary}\label{cor:G6}
Let $\eta_1^*>1$. Then
\begin{enumerate}[label=(\alph*),itemsep=0.4ex,leftmargin=0.8cm]
\item for $0< K<\sigma_1$ the point $G_2$ is locally (in fact, globally) stable;
\item for $K=\sigma_1$ the point $G_2$ coincides with $G_3$;
\item for $\sigma_1<K<K_1$ the point $G_3$ is locally stable;
\item\label{G6-4} for $K=K_1$ the
      point $G_3$ coincides with $G_6$;
\item\label{G6-5} for $K_1<K<\frac{Q}{\sigma_1}K_1$ the point $G_6$ is locally stable, where $Q$ is defined  by \eqref{Qdef}.
\end{enumerate}
We display this schematically as
\begin{equation}\label{branch236}
G_2\rightarrow G_3\rightarrow G_6\rightarrow \ldots
\end{equation}
\end{corollary}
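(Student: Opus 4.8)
The plan is to assemble Corollary~\ref{cor:G6} directly from the three stability criteria already proved, namely Propositions~\ref{pro:G2}, \ref{pro:G3} and \ref{pro:G6}, and then to verify the two coincidence statements (b) and (d) that glue the three pieces into a single continuous branch. First I would record that the standing hypothesis $\eta_1^*>1$ is precisely the condition $\eta_1>A_1$ appearing in Proposition~\ref{pro:G6}, since $\eta_1^*=\eta_1/A_1$ and $A_1>0$ by \eqref{assum}. With this identification, items (a), (c), (e) are immediate readings of the earlier results on their respective ranges: (a) is Proposition~\ref{pro:G2}, (c) is the $\eta_1^*>1$ alternative of Proposition~\ref{pro:G3}, and (e) is Proposition~\ref{pro:G6} with $Q$ given by \eqref{Qdef}. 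I would also point out that the three intervals $(0,\sigma_1)$, $(\sigma_1,K_1)$ and $(K_1,\tfrac{Q}{\sigma_1}K_1)$ abut at $\sigma_1$ and $K_1$ with neither overlap nor gap, which is exactly what allows them to form one branch.

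The substantive work is the continuity at the two thresholds. At $K=\sigma_1$ the coincidence (b) is trivial from \eqref{subeqpt3}, since the formula $I_1=\tfrac{r}{\alpha_1}(1-\tfrac{\sigma_1}{K})$ vanishes there, so $G_3=(\sigma_1,0,0,0)=G_2$, as already observed before $J_3$. For (d) I would substitute $K=K_1$ into the explicit coordinates of $G_6$ in \eqref{subeqpt6}: the $S$-coordinate becomes $\sigma_1 K_1/K_1=\sigma_1$ and the $I_{12}$-coordinate becomes $\tfrac{\mu_1}{\eta_1}(K_1/K_1-1)=0$, both matching $G_3$. The only nontrivial point is that the two prescriptions for the $I_1$-coordinate agree. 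Using $K_2=\tfrac{\sigma_3}{\sigma_1}K_1$ gives $K_1/K_2=\sigma_1/\sigma_3$, so the $G_6$-value is $\tfrac{\mu_3}{\eta_1}\bigl(1-\tfrac{\sigma_1}{\sigma_3}\bigr)=\tfrac{\alpha_3(\sigma_3-\sigma_1)}{\eta_1}$ after inserting $\sigma_3=\mu_3/\alpha_3$; on the other side $K_1=\tfrac{\sigma_1\eta_1^*}{\eta_1^*-1}$ gives $1-\sigma_1/K_1=1/\eta_1^*$, so the $G_3$-value is $\tfrac{r}{\alpha_1\eta_1^*}$, and unwinding $\eta_1^*=\eta_1/A_1$ with $A_1=\tfrac{\alpha_1\alpha_3}{r}(\sigma_3-\sigma_1)$ from \eqref{A1} shows this equals the same number $\tfrac{\alpha_3(\sigma_3-\sigma_1)}{\eta_1}$. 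Hence $G_3=G_6$ at $K=K_1$, which is (d).

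Finally I would conclude that, by (b) and (d) the concatenation $G_2\to G_3\to G_6$ is continuous in $K$, and by (a), (c), (e) it is locally stable away from the two threshold values $K=\sigma_1,K_1$; this is exactly an equilibrium branch in the sense of our definition, giving the schematic $G_2\rightarrow G_3\rightarrow G_6\rightarrow\ldots$ of \eqref{branch236}. I expect no real obstacle here, as the statement is essentially bookkeeping layered on top of the earlier propositions; the single place demanding care is the algebraic identity for the $I_1$-coordinate at $K=K_1$, where one must keep the chain of definitions $K_1,K_2,\sigma_3,\eta_1^*,A_1$ consistently aligned.
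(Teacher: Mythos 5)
Your proposal is correct and follows essentially the same route as the paper: items (a), (c), (e) are read off from Propositions~\ref{pro:G2}, \ref{pro:G3} and \ref{pro:G6}, and the coincidences (b), (d) are checked by substituting the threshold values into the explicit coordinate formulas. The only difference is that you verify explicitly that the $I_1$-coordinates of $G_3$ and $G_6$ agree at $K=K_1$ (a correct and worthwhile check), whereas the paper only notes the vanishing of the $I_{12}$-coordinate.
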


\begin{proof}
The first three items are obtained by combining Proposition ~\ref{pro:G3} with Proposition~\ref{pro:G2}. Note that the upper bound in (c) here is smaller than that in (c) in Corollary~\ref{cor:G3}. When $K=K_1=\frac{\sigma_1\eta_1^*}{\eta_1^*-1}$, it follows that the $I_{12}$-coordinate of $G_6$ vanishes (see \eqref{subeqpt6}), i.e. $G_6=G_3$, which proves \ref{G6-4}. Next,  Proposition~\ref{pro:G6} yields \ref{G6-5}.
\end{proof}

With Corollary~\ref{cor:G3} and Corollary~\ref{cor:G6} in hand, it is natural to ask:
What happens with an equilibrium branch  when $\eta^*_1>1$ and $K>K_1$?

So far, we see that any continuous equilibrium branch develops uniquely determined accordingly \eqref{branch236}. But at $G_6$ the situation becomes more complicated: this point may a priori bifurcate  into different points.

In this paper we only consider the particular case \ref{it4}, i.e. when $1<\eta_1^*<\eta_2^*$. This yields by \eqref{Qdef} that $Q=\sigma_3$, hence \eqref{conG6} implies that $G_6$ is locally stable for
$$
K_1<K<K_2.
$$
The upper critical value $K_2$ substituted  in \eqref{subeqpt6} implies that $I_1^*=0$, hence $G_6$ naturally bifurcates into $G_5$. It is easy to see that the corresponding $I_{12}^*$ for $G_5$ and $G_6$ coincide when $K=K_2$ holds. This observation combined with Proposition~\ref{pro:G5} implies that in this case for any $K>K_2$ the point $G_5$ will be locally stable, hence we arrive at

\begin{corollary}[Branch \ref{it4}]
\label{cor:G6-G5}
Let $\eta_2^*\ge \eta_1^*>1$ hold. Then
\begin{enumerate}[label=(\alph*),itemsep=0.4ex,leftmargin=0.8cm]
\item for $0< K<\sigma_1$ the point $G_2$ is locally (in fact, globally) stable;
\item for $K=\sigma_1$ the point $G_2$ coincides with $G_3$;
\item for $\sigma_1<K<K_1$ the point $G_3$ is locally stable;
\item\label{G65-4} for $K=K_1$ the
      point $G_3$ coincides with $G_6$;
\item\label{G65-5} for $K_1<K<K_2$ the point $G_6$ is locally stable;
\item\label{G65-6} for $K=K_2$ the point $G_6$ coincides with $G_5$;
\item\label{G65-7} for $K>K_2$ the point  $G_5$ is locally stable.
\end{enumerate}
We display this schematically as
\begin{equation}\label{branch2365}
G_2\rightarrow G_3\rightarrow G_6\rightarrow G_5
\end{equation}
\end{corollary}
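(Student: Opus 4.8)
The plan is to assemble this branch entirely from the local analyses already carried out for $G_3$, $G_6$, and $G_5$, so that the only genuinely new content is the verification that the concatenation is \emph{continuous} through the bifurcation at $K=K_2$. Items (a)--(e) are already contained in Corollary~\ref{cor:G6}; the one observation to add is that the standing hypothesis $\eta_2^*\ge\eta_1^*>1$ forces $Q=\sigma_3$ in \eqref{Qdef}, so the upper endpoint of the stability interval for $G_6$ becomes $\frac{Q}{\sigma_1}K_1=\frac{\sigma_3}{\sigma_1}K_1=K_2$ by the very definition of $K_2$. Hence $G_6$ is nonnegative and locally stable precisely for $K_1<K<K_2$, which is exactly (e).

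Next I would establish the bifurcation $G_6\to G_5$ at $K=K_2$, i.e. item (f). Substituting $K=K_2$ into the explicit coordinates \eqref{subeqpt6} of $G_6$, the factor $1-\frac{K}{K_2}$ in the $I_1$-coordinate vanishes, so $I_1^*\to 0$, while the $S$-coordinate $\frac{\sigma_1 K}{K_1}$ evaluates to $\sigma_3$; thus $G_6$ collapses to a point of the form $(\sigma_3,0,0,I_{12})$, which is precisely the shape of $G_5$ in \eqref{subeqpt5}. To confirm that this collapse is continuous and actually lands on $G_5$, I would check that the two $I_{12}$-coordinates agree at $K=K_2$. This is the one computation worth doing explicitly: using $\mu_1=\alpha_1\sigma_1$ together with $\eta_1^*=\eta_1/A_1$ and \eqref{A1}, the $G_6$-value $\frac{\mu_1}{\eta_1}(\frac{K_2}{K_1}-1)$ simplifies to $\frac{r}{\eta_1^*\alpha_3}$, while the $G_5$-value $\frac{r}{\alpha_3}(1-\frac{\sigma_3}{K_2})$ obtained in the proof of Proposition~\ref{pro:G5} reduces to the same quantity because $1-\frac{\sigma_3}{K_2}=\frac{1}{\eta_1^*}$, a direct consequence of $K_2=\frac{\sigma_3\eta_1^*}{\eta_1^*-1}$. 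So all four coordinates of $G_6$ and $G_5$ coincide at $K=K_2$, giving (f).

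For item (g) I would invoke Proposition~\ref{pro:G5} directly. Under $\eta_2^*\ge\eta_1^*>1$ one has $\eta=\min\{\eta_1^*,\eta_2^*\}=\eta_1^*>1$, so the stability criterion \eqref{G5e} reads $K>\frac{\sigma_3\eta_1^*}{\eta_1^*-1}=K_2$; hence $G_5$ is nonnegative and locally stable for every $K>K_2$, and by the monotonicity statement at the end of Proposition~\ref{pro:G5} it remains so for all larger $K$. Chaining (a)--(g) then yields the schematic branch $G_2\to G_3\to G_6\to G_5$ claimed in \eqref{branch2365}.

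I expect the main obstacle to be one of continuity at the thresholds rather than of stability: the local stability of each individual state is already settled by the earlier propositions, so the real work is confirming that the four coordinates of $G_6$ and $G_5$ match at $K=K_2$ (and, implicitly, that $G_3$ and $G_6$ match at $K=K_1$, already noted in Corollary~\ref{cor:G6}), so that the pieces glue into a genuine continuous equilibrium branch in the sense of the definition, with $\sigma_1,K_1,K_2$ as the only admissible threshold values. No delicate bifurcation analysis is needed here because, in contrast with the cases (iii)--(iv) deferred to \cite{forthcoming}, the inner equilibrium $G_8$ never enters this branch.
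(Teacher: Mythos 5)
Your proposal is correct and follows essentially the same route as the paper: items (a)--(e) are read off from Corollary~\ref{cor:G6} with $Q=\sigma_3$, the bifurcation $G_6\to G_5$ at $K=K_2$ is obtained from the vanishing of $I_1^*$ in \eqref{subeqpt6}, and item (g) comes from Proposition~\ref{pro:G5} with $\eta=\eta_1^*$. The only difference is that you carry out explicitly the matching of the $I_{12}$-coordinates at $K=K_2$ (both equal to $\tfrac{r}{\eta_1^*\alpha_3}$), which the paper dismisses as ``easy to see''; your computation is correct.
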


\subsection{Bifurcation of $G_6$}\label{sec:pr}

Thus, we are remained to study the  case when
\begin{equation}\label{etaA1}
\eta_2^*< \eta_1^*, \qquad \eta_1^*>1
\end{equation}
hold. Notice that in fact by virtue of \eqref{etaA13} the latter inequality implies
\begin{equation}\label{Deltalphaplus}
\Delta_\alpha>0.
\end{equation}
We know by \ref{G6-5} in Corollary~\ref{cor:G6} that $G_6$ is locally stable for
$$
K_1<K<\frac{\hat S_1\eta_1^*}{\eta_1^*-1}.
$$
Substituting the corresponding critical value $K =K_0$ such that
$$
K_0=\frac{\hat S_1\eta_1^*}{\eta_1^*-1}=
\frac{\Delta_\mu+\mu_3\gamma_2}{\Delta_\alpha+\alpha_3\gamma_2}\cdot
\frac{\eta_1^*}{\eta_1^*-1}
$$
in \eqref{subeqpt6} reveals that the coordinates $G_6$ \textit{do not vanish}, i.e. $G_6$ does not change its type. Instead it losts its local stability because the determinant of $J_6$ vanishes at this moment. To continue the equilibrium branch \eqref{branch236} beyond $G_6$ we need to find an appropriate candidate for a stable point. By the continuity argument (because $G_6$ keeps all coordinates nonzero for $K=K_0$), the only possible candidate for a continuous equilibrium branch is a point of type $G_8$. Since we do not have any explicit expression of $G_8$, the analysis in this case is more complicated and involves a certain bifurcation technique which we develop in a forthcoming paper \cite{forthcoming}.

\section{Concluding remarks}
It is  natural, from biological point of view, to relax the constancy condition on the transmission rates $\alpha_i$ and assume that in general they may depend on the carrying capacity. Indeed, a larger carrying capacity corresponds to a larger size of the population (or  the susceptible class), thus it must imply a slower spread of strains, i.e. $\alpha_i=\alpha_i(K)$ must be a non-increasing function. One natural assumptions is the following relation:
\begin{equation}\label{az}
\alpha_i(K)=\frac{a_i}{K}.
\end{equation}
This implies for the other fundamental constants
$$
\sigma_i=\frac{\mu_i}{a_i}K=:s_iK,
$$
and
$$
A_i=\frac{B_i}{K}, \quad \text{where }\quad B_1=\frac{a_1a_3(s_3-s_1)}{r} \text{ etc.}
$$
The main consequence of \eqref{az} is that the coordinates of a stable equilibrium point is no longer bounded and develop as $K$ increases.
For example,  under assumption \eqref{az} one has from \eqref{zineq} merely
$$
s_1K\le S\le K\min\{1,s_3\}.
$$
This, in particular implies that already the first bifurcation $S_2\to S_3$ is completely different. Indeed, it follows from Proposition~\ref{pro:G2} that $G_2$ becomes stable \textit{for all }$K>0$ provided $s_1\ge1$. In the nontrivial case $s_1<1$,  $G_2$ is \textit{never} stable. In general,  Proposition~\ref{pro:G3} and Corollary~\ref{cor:G6-G5}  instead imply

%Furthermore, in the remained case $s_1<1$ and $\eta_1^*>\frac{1}{1-s_1}$ Corollary~\ref{cor:G6-G5} implies that the alternatives (a)--(c) are not relevant, but

\begin{corollary}\label{cornew}
We have the following stability analysis:
\begin{itemize}
\item[(i)] If $s_1\ge1$ then $G_2$ is stable for all $K>0$;
\item[(ii)] If $s_1<1$ and $0<\eta_1^*\le \frac{1}{1-s_1}$  then $G_3$ stable for all $K>0$;
\item[] Let now $s_1<1$, $\eta_2^*>\eta_1^*> \frac{1}{1-s_1}$ hold. Then
\item[(iii)]   if  $s_3\ge 1$ or $s_3<1$ and $\eta_1^*<\frac{1}{1-s_3}$ then $G_6$ stable for all $K>0$;
\item[(iv)]  if $s_3<1$ and $\eta_1^*>\frac{1}{1-s_3}$ then $G_5$ stable for all $K>0$.
\end{itemize}
\end{corollary}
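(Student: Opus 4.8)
The plan is to reduce the corollary entirely to the stability criteria already established for the constant-rate model, namely Proposition~\ref{pro:G2}, Proposition~\ref{pro:G3}, and Corollary~\ref{cor:G6-G5}, by substituting the density-dependent ansatz \eqref{az} into the various threshold conditions and tracking how the critical values of $K$ transform. The key observation is that under \eqref{az} the quantities $\sigma_i=s_iK$ now scale \emph{linearly} with $K$, whereas the normalized coinfection rates $\eta_i^*=\eta_i/A_i$ remain \emph{independent} of $K$. Indeed, since $A_i=B_i/K$ and $\alpha_i=a_i/K$, every threshold inequality that previously compared $K$ against a fixed $\sigma_i$ now becomes an inequality comparing $K$ against a constant multiple of $K$ itself, which is therefore satisfied either for all $K>0$ or for no $K>0$. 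This is precisely the mechanism that collapses the transition diagrams into the single-regime statements (i)--(iv).

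First I would verify (i). By Proposition~\ref{pro:G2}, $G_2$ is stable iff $0<K<\sigma_1=s_1K$, i.e. iff $1<s_1$; hence if $s_1\ge1$ the condition $K<s_1K$ holds for all $K>0$ (with equality handled by the strictness of the original chain, since $s_1=1$ forces $G_2$ to remain the only admissible state as no bifurcation value is ever crossed), giving (i), while if $s_1<1$ the disease-free state is never stable. Next, for (ii) I would invoke Proposition~\ref{pro:G3}: assuming $s_1<1$, the point $G_3$ is stable for $K>\sigma_1$ when $\eta_1^*\le1$, and for $\sigma_1<K<K_1$ when $\eta_1^*>1$. Substituting $\sigma_1=s_1K$ into the first threshold gives $K>s_1K$, automatically true for all $K>0$; substituting into $K_1=\sigma_1\eta_1^*/(\eta_1^*-1)=s_1K\,\eta_1^*/(\eta_1^*-1)$ shows that the upper bound $K<K_1$ becomes $1<s_1\eta_1^*/(\eta_1^*-1)$, i.e. $\eta_1^*\le 1/(1-s_1)$. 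Thus $G_3$ is stable for all $K>0$ exactly when $0<\eta_1^*\le 1/(1-s_1)$, which is (ii).

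For (iii) and (iv) I would apply Corollary~\ref{cor:G6-G5}, which governs the regime $\eta_2^*\ge\eta_1^*>1$. Under \eqref{az} the condition $\eta_1^*>1/(1-s_1)$ guarantees (via the computation in (ii)) that the window for $G_3$ has closed, so the branch has already advanced to $G_6$; the point $G_6$ is stable on $K_1<K<K_2=(\sigma_3/\sigma_1)K_1$. Substituting $\sigma_3=s_3K$, $\sigma_1=s_1K$ gives $K_2=(s_3/s_1)K_1=s_3K\,\eta_1^*/(\eta_1^*-1)$, so the upper bound $K<K_2$ reads $1<s_3\eta_1^*/(\eta_1^*-1)$, i.e. $\eta_1^*<1/(1-s_3)$ (when $s_3<1$). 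Hence if $s_3\ge1$, or if $s_3<1$ with $\eta_1^*<1/(1-s_3)$, the bound $K<K_2$ holds for all $K>0$ and $G_6$ is stable throughout, giving (iii); otherwise $K_2$ becomes an effective finite-to-infinite threshold that is exceeded for all $K>0$, so the branch has passed to $G_5$, which by the last clause of Proposition~\ref{pro:G5} remains stable for all larger $K$, giving (iv).

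The main obstacle I anticipate is not any single computation but rather the careful bookkeeping of \emph{which} branch is active under the density-dependent scaling, together with the correct handling of the boundary cases $s_1=1$ and $s_3=1$ and the endpoint $\eta_1^*=1/(1-s_1)$. Because all the $K$-dependence cancels homogeneously, one must argue that no \emph{intermediate} bifurcation value of $K$ is ever attained in the interior $(0,\infty)$, so that the relevant equilibrium genuinely retains its type and stability for the whole half-line rather than merely on a subinterval; this amounts to checking that each transformed threshold is a strict inequality between constants and invoking the monotonicity statement in Proposition~\ref{pro:G5} for the terminal $G_5$ case. Once this consistency is established, the corollary follows directly from the substitutions above.
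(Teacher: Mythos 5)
Your overall route---substituting $\sigma_i=s_iK$ and $A_i=B_i/K$ into the stability thresholds of Propositions~\ref{pro:G2}, \ref{pro:G3}, \ref{pro:G5} and Corollary~\ref{cor:G6-G5} and observing that each critical value of $K$ becomes a constant multiple of $K$ itself---is exactly what the paper intends (the paper offers no more than a citation of those results), and your algebra for the thresholds $K_1$ and $K_2$ is correct: $K>K_1$ collapses to $\eta_1^*>\frac{1}{1-s_1}$ and $K<K_2$ to $\eta_1^*(1-s_3)<1$, which reproduces items (i)--(iv).

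There is, however, one genuine gap, and it sits at the linchpin of the argument: your claim that the normalized rates $\eta_i^*=\eta_i/A_i$ ``remain independent of $K$ \ldots since $A_i=B_i/K$'' is, as stated, the opposite of what follows from \eqref{az}. If only the $\alpha_i$ are rescaled and $\eta_i$ is held constant, then $\eta_i^*=\eta_i/A_i=\eta_i K/B_i$ grows \emph{linearly} in $K$; consequently the hypothesis of (ii) cannot hold for all $K$, the window $K_1<K<K_2$ for $G_6$ does not degenerate into a half-line, and items (ii)--(iv) fail as ``for all $K>0$'' statements. The corollary is meaningful only under the additional (biologically natural, dimensionally forced, but unstated) assumption that the coinfection transmission rates also scale as $\eta_i=e_i/K$ (and likewise $\gamma_i$), in which case $\eta_i^*=e_i/B_i$ is indeed a $K$-independent dimensionless constant and your computations go through verbatim. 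You need to state and use this extra scaling explicitly rather than derive the constancy of $\eta_i^*$ from $A_i=B_i/K$ alone. A second, minor point: at the boundary $s_1=1$ Proposition~\ref{pro:G2} requires the strict inequality $K<\sigma_1$, so for $s_1=1$ every $K$ sits exactly at the bifurcation value $K=\sigma_1$ and ``stable'' can only be meant in the marginal sense; your parenthetical handles this no less carefully than the paper does, but it is an assertion rather than an argument.
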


Thus, we have a complete description in the cases $\eta_1^*\le 1$ and  $\eta_2^*\ge \eta_1^*>1$. The remained case $\eta_1^*\ge \max\{1,\eta_2^*\}$ will be considered in \cite{forthcoming}.

\bigskip
\noindent {\bf Acknowledgements.} Vladimir Kozlov was supported by the Swedish Research Council (VR), 2017-03837.
% Authors must disclose all relationships or interests that
% could have direct or potential influence or impart bias on
% the work:

 \section*{Data availability statement}
The manuscript has no associated data.

 \section*{Compliance with ethical standards}

\textbf{Conflict of interest:} The authors declare that they have no conflict of interests.

% BibTeX users please use one of
%\bibliographystyle{spbasic}      % basic style, author-year citations
%\bibliographystyle{spmpsci}      % mathematics and physical sciences
%\bibliographystyle{spphys}       % APS-like style for physics
%\bibliography{}   % name your BibTeX data base

\bibliographystyle{spmpsci}%
%\bibliography{references7}

\begin{thebibliography}{10}
\providecommand{\url}[1]{{#1}}
\providecommand{\urlprefix}{URL }
\expandafter\ifx\csname urlstyle\endcsname\relax
  \providecommand{\doi}[1]{DOI~\discretionary{}{}{}#1}\else
  \providecommand{\doi}{DOI~\discretionary{}{}{}\begingroup
  \urlstyle{rm}\Url}\fi

\bibitem{Allen2}
Ackleh, A.S., Allen, L.J.: Competitive exclusion and coexistence for pathogens
  in an epidemic model with variable population size.
\newblock Journal of mathematical biology \textbf{47}(2), 153--168 (2003)

\bibitem{Allen}
Allen, L.J., Langlais, M., Phillips, C.J.: The dynamics of two viral infections
  in a single host population with applications to hantavirus.
\newblock Math. Biosci. \textbf{186}(2), 191--217 (2003)

\bibitem{forthcoming}
Andersson, J., Ghersheen, S., Kozlov, V., Tkachev, V., Wennergren, U.: \ttitle,
  the bifurcation analysis  (2020).
\newblock Submitted

\bibitem{Bichara}
Bichara, D., Iggidr, A., Sallet, G.: Global analysis of multi-strains sis, sir
  and msir epidemic models.
\newblock Journal of Applied Mathematics and Computing \textbf{44}, 273--292
  (2014)

\bibitem{Boldin}
Boldin, B.: Introducing a population into a steady community: the critical
  case, the center manifold, and the direction of bifurcation.
\newblock SIAM J. Appl. Math. \textbf{66}(4), 1424--1453 (2006).
\newblock \doi{10.1137/050629082}.
\newblock \urlprefix\url{https://doi.org/10.1137/050629082}

\bibitem{Bremermann}
Bremermann, H.J., Thieme, H.: A competitive exclusion principle for pathogen
  virulence.
\newblock Journal of mathematical biology \textbf{27}(2), 179--190 (1989)

\bibitem{Castillo}
Castillo-Chavez, C., Velasco-Hernandez, J.X.: On the relationship between
  evolution of virulence and host demography.
\newblock Journal of theoretical biology \textbf{192}(4), 437--444 (1998)

\bibitem{CrandallRabinowitz1}
Crandall, M.G., Rabinowitz, P.H.: The principle of exchange of stability.
\newblock In: Dynamical systems ({P}roc. {I}nternat. {S}ympos., {U}niv.
  {F}lorida, {G}ainesville, {F}la., 1976), pp. 27--41 (1977)

\bibitem{DiekmannGetto}
Diekmann, O., Getto, P., Gyllenberg, M.: Stability and bifurcation analysis of
  {V}olterra functional equations in the light of suns and stars.
\newblock SIAM J. Math. Anal. \textbf{39}(4), 1023--1069 (2007/08).
\newblock \doi{10.1137/060659211}.
\newblock \urlprefix\url{https://doi.org/10.1137/060659211}

\bibitem{Diekmann1990}
Diekmann, O., Heesterbeek, J.A.P., Metz, J.A.J.: On the definition and the
  computation of the basic reproduction ratio {$R_0$} in models for infectious
  diseases in heterogeneous populations.
\newblock J. Math. Biol. \textbf{28}(4), 365--382 (1990).
\newblock \doi{10.1007/BF00178324}.
\newblock \urlprefix\url{https://doi.org/10.1007/BF00178324}

\bibitem{Gantmacher}
Gantmacher, F.R.: The theory of matrices. {V}ols. 1, 2.
\newblock Translated by K. A. Hirsch. Chelsea Publishing Co., New York (1959)

\bibitem{SKTW18a}
Ghersheen, S., Kozlov, V., Tkachev, V.G., Wennergren, U.: Dynamical behaviour
  of sir model with coinfection: the case of finite carrying capacity.
\newblock Math. Meth. Appl. Sci. \textbf{42}(8) (2019)

\bibitem{Gog}
Gog, J.R., Grenfell, B.T.: Dynamics and selection of many-strain pathogens.
\newblock Proceedings of the National Academy of Sciences \textbf{99}(26),
  17209--17214 (2002).
\newblock \doi{10.1073/pnas.252512799}.
\newblock \urlprefix\url{https://www.pnas.org/content/99/26/17209}

\bibitem{new_cite}
Marie, I.E., Masaomi, K.: Effects of metapopulation mobility and climate change
  in si-sir model for malaria disease.
\newblock In: Proceedings of the 12th International Conference on Computer
  Modeling and Simulation, ICCMS '20, p. 99–103. Association for Computing
  Machinery, New York, NY, USA (2020).
\newblock \doi{10.1145/3408066.3408084}.
\newblock \urlprefix\url{https://doi.org/10.1145/3408066.3408084}

\bibitem{Martcheva}
Martcheva, M., Pilyugin, S.S.: The role of coinfection in multidisease
  dynamics.
\newblock SIAM J. Appl. Math. \textbf{66}(3), 843--872 (2006).
\newblock \doi{10.1137/040619272}.
\newblock \urlprefix\url{https://doi.org/10.1137/040619272}

\bibitem{May4}
May, R.M., Nowak, M.A.: Coinfection and the evolution of parasite virulence.
\newblock Proceedings of the Royal Society of London. Series B: Biological
  Sciences \textbf{261}(1361), 209--215 (1995)

\bibitem{Mosquera}
Mosquera, J., Adler, F.R.: Evolution of virulence: a unified framework for
  coinfection and superinfection.
\newblock Journal of Theoretical Biology \textbf{195}(3), 293--313 (1998)

\bibitem{Newman}
Newman, M.E.: Threshold effects for two pathogens spreading on a network.
\newblock Physical review letters \textbf{95}(10), 108701 (2005)

\bibitem{May3}
Nowak, M.A., May, R.M.: Superinfection and the evolution of parasite virulence.
\newblock Proceedings of the Royal Society of London. Series B: Biological
  Sciences \textbf{255}(1342), 81--89 (1994)

\bibitem{Zhou}
Zhou, J., Hethcote, H.W.: Population size dependent incidence in models for
  diseases without immunity.
\newblock J. Math. Biol. \textbf{32}(8), 809--834 (1994)

\end{thebibliography}

% Non-BibTeX users please use

\end{document}